\newcommand{\eps}{\varepsilon}
\newcommand{\epsilonr}{\epsilon_r}
\newcommand{\epsiloni}{\epsilon_i}
\newcommand{\hepsilonr}{\hat{\epsilon}_r}
\newcommand{\hepsiloni}{\hat{\epsilon}_i}
\newcommand{\Rl}{\mathbb{R}}
\newcommand{\Nl}{\mathbb{N}}
\newcommand{\Ir}{\mathbb{Z}}
\newcommand{\Ts}{\mathbb{T}}
\newcommand{\sgn}{\operatorname{sgn}}
\renewcommand{\Pr}{\operatorname{\mathbf{P}}}
\newcommand{\Qr}{\operatorname{\mathbf{Q}}}
\newtheorem{theorem}{Theorem}
\newtheorem{definition}[theorem]{Definition}
\newtheorem{lemma}[theorem]{Lemma}
\title{Nonlinear Surface Plasmons}
\author{Ryan G. Halabi}
\address{ Department of Mathematics, University of California at Davis}
\email{rghalabi@math.ucdavis.edu}
\author{John K. Hunter}
\address{ Department of Mathematics, University of California at Davis}
\thanks{The second author was partially supported by the NSF under grant number  DMS-1312342.}
\email{jkhunter@ucdavis.edu}
\date{October 27, 2015}
\subjclass[2010]{35Q60}
\keywords{Plasmon, surface wave, nonlinear optics, nonlocal evolution equation}
\begin{document}

\begin{abstract}
We derive an asymptotic equation for quasi-static, nonlinear surface plasmons propagating on a planar interface between isotropic media. The plasmons are nondispersive with a constant linearized frequency that is independent of their wavenumber. The spatial profile of a weakly nonlinear plasmon satisfies a nonlocal, cubically nonlinear evolution equation that couples its left-moving and right-moving Fourier components. We prove short-time existence of smooth solutions of the asymptotic equation and describe its Hamiltonian structure. We also prove global existence of weak solutions of a unidirectional reduction of the asymptotic equation.  Numerical solutions show that nonlinear effects can lead to the strong spatial focusing of plasmons. Solutions of the unidirectional equation appear to remain smooth when they focus, but it is unclear whether or not focusing can lead to singularity formation in solutions of the bidirectional equation.
\end{abstract}

\maketitle

\section{Introduction}
Surface plasmon polaritons (or SPPs) are electromagnetic surface waves that propagate on an interface between a dielectric and a conductor and decay
exponentially away from the interface; for example, optical SPPs propagate on an interface between air and gold.
Maradudin et.~al.\ \cite{plasmonics} give an overview of SPPs and their applications in plasmonics. Kauranen and Zayats \cite{zayats} review nonlinear aspects
of plasmonics.

We model SPPs by the classical macroscopic Maxwell equations, and
consider the basic case of SPPs that propagate along a planar interface separating an isotropic dielectric in the upper half-space from an
isotropic conductor in the lower half-space (see Figure~\ref{fig:spp}).

\begin{figure}[t]
\centering
\includegraphics[width=2.5in]{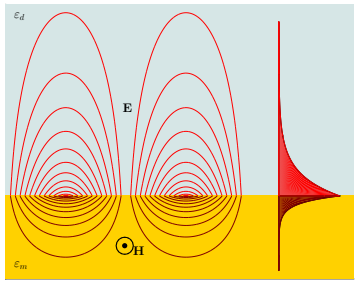}
\caption{An SPP on the interface between a dielectric and a metal. The electric field lines are shown in the $(x,y)$-plane and the field decays exponentially away from the interface. The magnetic field is pointing inward or outward in the $z$-direction.}
\label{fig:spp}
\end{figure}

Figure~\ref{fig:disp} shows a typical linearized dispersion relation for such SPPs.
The phase speed of long-wavelength SPPs approaches a constant speed $c_0$, and the frequency of short-wavelength SPPs approaches a constant
frequency $\omega_0$. This limiting frequency  is determined by the condition that
\begin{equation*}
\hat{\epsilon}^+(\omega_0) + \hat{\epsilon}^-(\omega_0) = 0
\end{equation*}
where $\hat{\epsilon}^+ > 0$ is the permittivity of the dielectric and $\hat{\epsilon}^- < 0$ is the permittivity of the conductor, expressed as
functions of the frequency. In this short-wave limit, the electromagnetic field is approximately quasi-static, and we will refer to the corresponding oscillations
on the interface as surface plasmons (SPs) for short.

\begin{figure}
\centering
\includegraphics[width=3.5in]{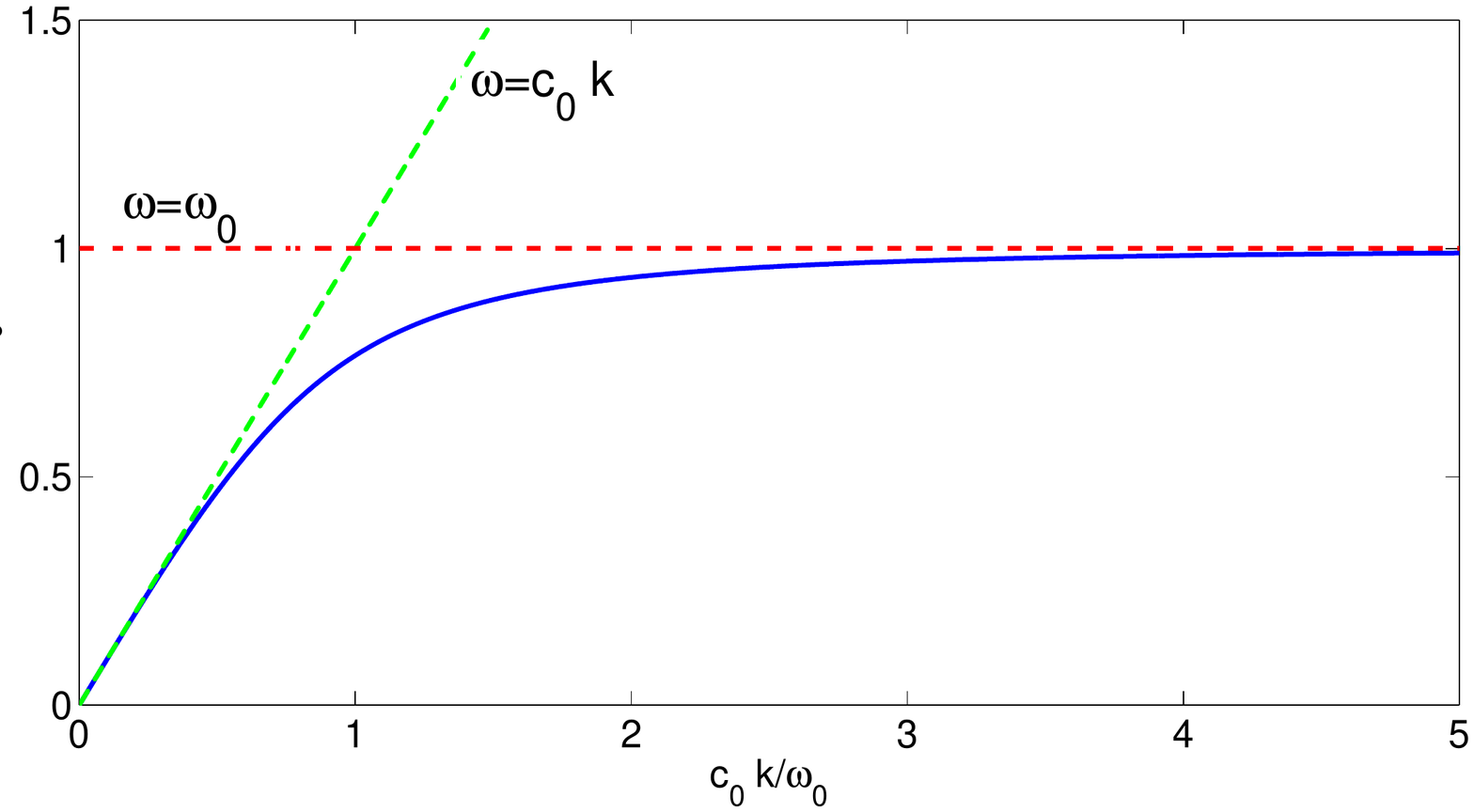}
\caption{The linearized dispersion relation \eqref{drude_disp} for SPPs on the interface between a vacuum and a Drude metal.}
\label{fig:disp}
\end{figure}

Nonlinear SPs are interesting to study because their linearized frequency is independent of their wavenumber.
As a result, they are nondispersive with zero group velocity, and weakly nonlinear SPs are subject to a large family of cubically nonlinear, four-wave resonant interactions
between wavenumbers $\{k_1,k_2,k_3,k_4\}$ such that
\[
k_1 + k_2 = k_3 + k_4.
\]
The corresponding resonance condition for the frequencies, $\omega_0 + \omega_0 = \omega_0 +\omega_0$, is satisfied automatically.

The nonlinear self-interaction of an SP therefore leads to a wave with a narrow frequency spectrum and a wide wavenumber spectrum, meaning that nonlinearity distorts the spatial profile of the wave. This behavior is qualitatively different from NLS-type descriptions of dispersive waves in nonlinear optics,
where the spatial profile consists of a slowly modulated, harmonic wavetrain \cite{newell}. Thus, the limit considered here is useful in understanding the
spatial dynamics of short-wave optical pulses in the case of surface plasmons.

We remark that if the spatial dispersion of the optical media is negligible, as we assume here, then the behavior of the SP depends only on the response of the media
at the frequency $\omega_0$, so we do not need to make any specific assumptions about the frequency-dependence of their linear permittivity or nonlinear susceptibility.

A related example of constant-frequency surface waves on a vorticity discontinuity in an inviscid, incompressible fluid is analyzed in \cite{biello},
which gives further discussion of constant-frequency waves.

Our asymptotic solution for the tangential electric field in the $x$-direction of a weakly nonlinear SP on the interface $z=0$ has the form
\begin{equation}
E_\parallel(x,0,t) = \eps \left[A(x,\eps^2t) e^{-i\omega_0 t} + A^*(x,\eps^2t) e^{i\omega_0 t}\right] + O(\eps^3),
\label{epar}
\end{equation}
as $\eps \to 0$ with $t=O(\eps^{-2})$, where $A(x,\tau)$ is complex-valued amplitude.
This solution consists of an oscillation of frequency $\omega_0$ whose spatial profile
evolves slowly in time.

To write the evolution equation for $A$ in a convenient form, we introduce the projections $\Pr$, $\Qr$ onto positive and negative wave numbers, and define
\begin{align}
\begin{split}
u (x,\tau) &= \Pr[A](x,\tau)
=  \int_0^\infty \hat{A}(k,\tau) e^{ikx}\, dk,
\\
v(x,\tau) &= \Qr[A](x,\tau)
=  \int_{-\infty}^0 \hat{A}(k,\tau) e^{ikx}\, dk,
\end{split}
\label{def_uv}
\end{align}
where
\[
A(x,\tau) = \int_{-\infty}^\infty \hat{A}(k,\tau) e^{ikx}\, dk.
\]
For spatially periodic solutions, the Fourier integrals are replaced by Fourier series. Then
\begin{align*}
&E_\parallel = \eps\left[u e^{-i\omega_0t} + u^* e^{i\omega_0t}\right] + \eps\left[v e^{-i\omega_0t} + v^* e^{i\omega_0t}\right]
+ O(\eps^3),
\end{align*}
where $u$ is the amplitude of the right-moving, positive-frequency waves and
$v$ is the amplitude of the left-moving, positive-frequency waves.

In the absence of damping and dispersion, the complex-valued amplitudes $u(x,\tau)$, $v(x,\tau)$ satisfy the following cubically nonlinear, nonlocal asymptotic equations
\begin{align}
\begin{split}
u_{\tau} &= i\partial_x \Pr\left[\alpha u^*\partial_x^{-1}(u^2) + \beta v \partial_x^{-1}(uv^*)\right], \qquad \Pr[u] = u,
\\
v_{\tau} &= i\partial_x \Qr\left[\alpha v^* \partial_x^{-1}(v^2) + \beta u \partial_x^{-1}(u^*v)\right],\qquad\Qr[v]=v,
\end{split}
\label{uv_eq}
\end{align}
where the coefficients $\alpha, \beta\in \Rl$ are proportional to sums of the nonlinear susceptibilities of the media on either side of the interface,
and the inverse derivative is defined spectrally by
\[
\partial_x^{-1}(e^{ikx}) = \frac{1}{ik}e^{ikx}.
\]
Here, we assume that the Fourier transforms of $u$, $v$ vanish to sufficient order at $k=0$ in the spatial case $x\in \Rl$,
or that $u$, $v$ have zero mean in the spatially periodic case $x\in \Ts$. As we show in Section~\ref{sec:short_time}, the spatially
periodic form of \eqref{uv_eq} is an ODE in the $L^2$-Sobolev space $H^s(\Ts)$ for $s > 1/2$.

The inclusion of weak short-wave damping  and dispersion in the asymptotic expansion leads to a more general equation \eqref{uv_eq_disp}, which consists of \eqref{uv_eq}
with additional lower-order, linear terms. The spectral form of this equation
for $\hat{A} = \hat{u} + \hat{v}$ is given in \eqref{spec_eq}.

A basic feature of \eqref{uv_eq} is that the actions of the left and right moving waves, given in \eqref{uv_action},
are separately conserved. In quantum-mechanical terms, this means that the numbers of left and right moving plasmons are conserved.
In billiard-ball terms, the collision of two right-moving plasmons produces two right-moving plasmons; recoil into a low-momentum, left-moving plasmon and a high-momentum, right-moving plasmon is not allowed.

This property is explained by the fact that surface plasmons have helicity \cite{helicity}, even though they are plane-polarized, and the helicities
of left and right moving surface plasmons have opposite signs. As a result, conservation of angular momentum implies that nonlinear interactions cannot
convert right-moving plasmons into left-moving plasmons, or visa-versa.

In particular, setting $v=0$ in \eqref{uv_eq} and normalizing $\alpha=1$, we get an equation for unidirectional surface plasmons
\begin{equation}
u_{\tau} = i\partial_x \Pr\left[u^*\partial_x^{-1}(u^2)\right],\qquad \Pr[u] = u.
\label{u_eq}
\end{equation}
As we discuss further in Section~\ref{sec:ham},
this equation is related to the completely integrable Szeg\"o equation,
\begin{equation}
i u_\tau = \Pr\left[|u|^2 u\right],\qquad \Pr[u] = u,
\label{szego_eq}
\end{equation}
which was introduced by G\'erard and Grellier \cite{szego} as a model for totally nondispersive evolution equations. In our context,
the projection operator arises naturally from the condition that nonlinear interactions between positive-wavenumber
components do not generate negative-wavenumber components.

In the rest of this paper, we describe the linearized equations for SPPs and derive the asymptotic equations for nonlinear SPs.
We prove a short-time existence result for the asymptotic equation \eqref{uv_eq}, describe its Hamiltonian structure, and prove the global existence
of weak solutions for the unidirectional equation \eqref{u_eq}. We also present some numerical solutions, which show that
nonlinear effects can lead to strong spatial focusing of the SPs. Solutions of the unidirectional equation \eqref{u_eq} appear to remain smooth through the focusing, but it is unclear whether or not singularities form in solutions of the system \eqref{uv_eq}.

\section{Maxwell's equations}

The macroscopic Maxwell equations for electric fields $\mathbf{E}$, $\mathbf{D}$ and magnetic fields $\mathbf{B}$, $\mathbf{H}$ are
\begin{align*}
&\nabla \cdot \mathbf{B} = 0 , \qquad \nabla \times \mathbf{E} + \mathbf{B}_t = 0,
\\
&\nabla \cdot \mathbf{D} = 0, \qquad \nabla \times \mathbf{H} -\mathbf{D}_t = 0,
\end{align*}
where we assume that there are no external charges or currents.

We consider nonmagnetic, isotropic media without spatial dispersion, in which case we have the constitutive relations
\begin{align}
\begin{split}
&\mathbf{D}(\mathbf{x},t) =
 \int_{-\infty}^t \epsilon(t-t') \mathbf{E}(\mathbf{x},t') \, dt'
 \\
 &+ \int_{-\infty}^t \chi(t-t_1, t-t_2, t- t_3) [\mathbf{E}(\mathbf{x},t_1) \cdot
 \mathbf{E}(\mathbf{x},t_2)] \mathbf{E}(\mathbf{x},t_3) \, dt_1 dt_2 dt_3 + O(|\mathbf{E}|^5),
 \\
&\mathbf{B}(\mathbf{x},t) = \mu \mathbf{H}(\mathbf{x},t),
\end{split}
\label{non_dim_con}
\end{align}
where $\epsilon$ is the permittivity, $\chi$ is a third-order nonlinear susceptibility,
and the magnetic permeability $\mu$ is a constant.

We suppose that the interface between the dielectric and the conductor is located at $z=0$, with
\begin{equation}
\epsilon, \chi = \begin{cases}\epsilon^+, \chi^+ &\mbox{if $z>0$,}\\ \epsilon^-, \chi^- &\mbox{if $z<0$,}\end{cases}
\label{eps_pm}
\end{equation}
and assume that both media have the same magnetic permeability $\mu$. The jump conditions across $z=0$ are
\begin{align}
&[\mathbf{n}\cdot\mathbf{B}] = 0 , \qquad [\mathbf{n}\times\mathbf{E}] = 0,
\qquad [\mathbf{n}\cdot\mathbf{D}] = 0 , \qquad [\mathbf{n}\times\mathbf{H}] = 0,
\label{jump_con}
\end{align}
where $\mathbf{n} = (0,0,1)^T$ is the unit normal,
$[F] = F^+ - F^-$ denotes the jump in $F$ across the interface, and we assume that there are no surface charges or currents.
We further require that the fields approach zero as $|z|\to\infty$.

We consider transverse-magnetic SPPs that propagate in the $x$-direction and decay in the $z$-direction,
with $\mathbf{E}$ in the $(x,z)$-plane and $\mathbf{B}$ in the $y$-direction.
Solution of the linearized equations for Fourier-Laplace modes gives an electric field of the form \cite{plasmonics}
\[
\mathbf{E}(x,z,t) = \begin{cases} \hat{\mathbf{\Phi}}^+ e^{ikx - \beta^+ |k| z - i\omega t} &\mbox{in $z>0$,}\\
\hat{\mathbf{\Phi}}^- e^{ikx + \beta^-|k| z - i\omega t} &\mbox{in $z<0$,}\end{cases}
\]
where $\beta^\pm$ are positive constants and $\hat{\mathbf{\Phi}}^\pm$ are constant vectors in the $(x,z)$-plane. The frequency $\omega(k)$ satisfies the
dispersion relation
\begin{equation}
k^2 = \mu\omega^2\left[\frac{\hat{\epsilon}^+(\omega) \hat{\epsilon}^-(\omega)}{\hat{\epsilon}^+(\omega) + \hat{\epsilon}^-(\omega)}\right],
\label{gen_disp}
\end{equation}
where $\hat{\epsilon}(\omega)$ denotes the Fourier transform of $\epsilon(t)$.

As a typical example, the permitivities for an interface between a vacuum and a loss-less Drude metal are given by
\[
\hat{\epsilon}_+ = \epsilon_0,
\qquad
\hat{\epsilon}_-(\omega) = \epsilon_0\left(1 - \frac{\omega_p^2}{\omega^2}\right),
\]
where $\omega_p$ is the plasma frequency of the metal. In that case, the dispersion relation \eqref{gen_disp} becomes
\begin{equation}
\omega^2 =  \omega_0^2 + c_0^2 k^2 - \sqrt{\omega_0^4 + c_0^4 k^4},
\label{drude_disp}
\end{equation}
where the limiting frequency $\omega_0$ and speed $c_0$ are given by
\[
\omega_0 = \frac{\omega_p}{\sqrt{2}},\qquad c_0 = \frac{1}{\sqrt{\epsilon_0\mu}}.
\]
This dispersion relation is plotted in Figure~\ref{fig:disp}.

For example, one estimate of the plasma frequency for the Drude model of gold, cited in \cite{west}, is $\omega_p \approx 1.35 \times 10^{16}$ $\text{rad}\, \text{s}^{-1}$. The corresponding limiting frequency of an SP on a vacuum-gold interface is $\omega_0 \approx 9.5 \times 10^{15}$ $\text{rad}\, \text{s}^{-1}$, which lies in the near ultraviolet.
The wavenumber $k_0 = \omega_0/c_0$ is given by $k_0 \approx 3.2 \times 10^{7}$ $\text{m}^{-1}$, and the frequency of an SP is close to $\omega_0$
when $k \gg k_0$, corresponding to wavelengths $\lambda \ll 200$ nm. Ultraviolet frequencies are outside the range of usual applications of SPs,
but, with the inclusion of weak dispersive effects, our asymptotic solution applies to SPs with somewhat smaller frequencies. Moreover, different parameter values are relevant for other plasmonic materials than gold.

If damping effects are included in the free-electron Drude model for the metal, then one gets a complex permittivity
\[
\hat{\epsilon}(\omega) = \epsilon_0\left(1- \frac{\omega_p^2}{\omega^2 + i \gamma\omega}\right)
=   \epsilon_0\left(1- \frac{\omega_p^2}{\omega^2} + \frac{i\gamma\omega_p^2}{\omega^3}
+ O(\gamma^2)\right).
\]
Typical values of $\gamma/\omega_0$ are of the order $10^{-2}$, corresponding to relatively weak damping of an SP over the time-scale of its
period.

\section{Asymptotic expansion}

In order to carry out an asymptotic expansion for quasi-static SPs, we first non-dimensionalize Maxwell's equations. Let $\omega_0$,
$c_0$, $\epsilon_0$, and $\mu_0$  be a characteristic frequency, wave speed, permittivity, and permeability for the SP,
with $c_0^2 = (\epsilon_0\mu_0)^{-1}$, and
let $E_0$ denote a characteristic electric field strength at which the
response of the media is nonlinear. We define corresponding wavenumber and magnetic field parameters by
\[
 k_0 = \frac{\omega_0}{c_0},\qquad
B_0 = \frac{E_0}{c_0}.
\]

The quasi-static limit applies to wavenumbers $k \gg k_0$. If $\lambda$ is a characteristic length-scale for variations
of the electric field, then we assume that
\[
\eps = k_0 \lambda  \ll 1
\]
is a small parameter.
If $E_*$ is a typical magnitude of the electric field strength in the SP, then we also assume that
\[
 \delta= \frac{E_*}{E_0} \ll 1.
\]
In the expansion below, we take $\delta = \epsilon$, which leads to a balance between weak nonlinearity and weak short-wave dispersion.
The nondispersive equations \eqref{uv_eq} apply in the limit $\eps \ll \delta \ll 1$.

We define dimensionless variables, written with a tilde,  by
\begin{align*}
&\tilde{\mathbf{x}} = \frac{\mathbf{x}}{\lambda}, \qquad \tilde{t} = \omega_0 t,
\qquad \tilde{\mathbf{E}} = \frac{\mathbf{E}}{E_0},\qquad \tilde{\mathbf{D}} = \frac{\mathbf{D}}{\epsilon_0E_0},\qquad
\tilde{\mathbf{B}} = \frac{\mathbf{B}}{B_0},\qquad\tilde{\mathbf{H}} = \frac{\mu_0\mathbf{H}}{B_0}.
\end{align*}
The corresponding dimensionless permittivity,  nonlinear susceptibility, and permeability are
\[
\tilde{\epsilon} = \frac{\epsilon}{\epsilon_0 \omega_0},\qquad \tilde{\chi} = \frac{E_0^2\chi}{\epsilon_0\omega_0^3},\qquad
\tilde{\mu} = \frac{\mu}{\mu_0}.
\]

After dropping the tildes, we get the nondimensionalized Maxwell equations
\begin{align}
\begin{split}
&\nabla \cdot \mathbf{B} = 0, \qquad \nabla \times \mathbf{E} + \eps \mathbf{B}_t = 0,
\\
&\nabla \cdot \mathbf{D} = 0, \qquad
\nabla \times \mathbf{H} - \eps   \mathbf{D}_t = 0,
\end{split}
\label{non_dim_max}
\end{align}
with the constitutive relations \eqref{non_dim_con}, where
$\epsilon=\epsilon^\pm$ and $\chi = \chi^\pm$  are the nondimensionalized permitivities and
nonlinear susceptibilities in $z > 0$ and $z<0$, and $\mu$ is the nondimensionalized
permeability.

The simplest form of the equations, in which we neglect dispersion and include only nonlinearity, arises when we take $\eps=0$ in \eqref{non_dim_max}.
In that case, $\mathbf{H}=\mathbf{B} = 0$ and $\mathbf{D}$, $\mathbf{E}$ satisfy the purely electrostatic equations
\[
\nabla \times \mathbf{E}= 0,
\qquad
\nabla \cdot \mathbf{D} = 0,
\]
with the time-dependent constitutive relation in \eqref{non_dim_con}.

We assume that, in the frequency range of interest, the permittivity has an expansion
\begin{equation}
\epsilon = \epsilonr + i\eps^2\epsiloni,
\label{perm_exp}
\end{equation}
where $\epsilonr$ is the leading-order real part of $\epsilon$ and $\eps^2\epsiloni$ is a small imaginary part that describes
weak damping of the SP. We also assume that the susceptibility $\chi$ is real-valued to leading order in $\eps$.

We denote the Fourier transforms of $\epsilonr$, $\epsiloni$, and $\chi$
by $\epsilonr$, $\hepsiloni$, and $\hat{\chi}$, where
\begin{align*}
&\hepsilonr(\omega) = \int \epsilon_r(t)e^{i\omega t}\, dt,\qquad\hepsiloni(\omega) = \int \epsilon_i(t)e^{i\omega t}\, dt,
\\
&\hat{\chi}(\omega_1,\omega_2,\omega_3) = \int \chi(t_1,t_2,t_3) e^{i\omega_1 t_1+i\omega_2 t_2+i\omega_3 t_3}\, dt_1 dt_2 dt_3.
\end{align*}
These transforms have the symmetry properties
\begin{align}
\begin{split}
&\hepsilonr(-\omega) = \hepsilonr(\omega),\qquad \hepsiloni(-\omega) = -\hepsiloni(\omega),
\\
&\hat{\chi}(-\omega_1,-\omega_2,-\omega_3) = \hat{\chi}(\omega_1,\omega_2,\omega_3),
\\
&\hat{\chi}(\omega_2,\omega_1,\omega_3)=\hat{\chi}(\omega_1,\omega_2,\omega_3).
\end{split}
\label{sym_chi}
\end{align}

Using the method of multiple scales, we look for a weakly nonlinear asymptotic solution for SPs
of the form
\begin{align}
\begin{split}
\mathbf{E} &= \eps \mathbf{E}_1(x,z, t,\tau) + \eps^3 \mathbf{E}_3(x,z, t, \tau)  + O(\eps^5),
\\
\mathbf{D}  &= \eps \mathbf{D}_1(x,z, t,\tau)  + \eps^3 \mathbf{D}_3(x,z, t,\tau)  + O(\eps^5),
\\
\mathbf{B} &=  \eps^2 \mathbf{B}_2(x,z, t, \tau)  + O(\eps^4),
\\
\mathbf{H} &=  \eps^2 \mathbf{H}_2(x,z, t, \tau)  + O(\eps^4),
\end{split}
\label{asy_exp}
\end{align}
where the `slow' time variable $\tau$ is evaluated at $\eps^2 t$.

The details of the calculation are given in Section~\ref{sec:app}. We summarize the result here.
At the order $\eps$, we find that the leading-order electric field is given by the linearized solution
\begin{align}
\begin{split}
\mathbf{E}_1(x,z, t, \tau) &= \mathbf{\Phi}_1(x,z, \tau) e^{ -i \omega_0 t} + \mathbf{\Phi}_1^*(x,z, \tau) e^{ i \omega_0 t},
\\
\mathbf{\Phi}_1(x,z, \tau )&= \int  \left[\begin{array}{c}
            1        \\
              0  \\
          i \sgn(kz)
     \end{array}  \right]  \hat{A}(k,\tau)   e^{ikx - |kz|} \, dk,
     \end{split}
\label{lin_sol}
\end{align}
where $\omega_0$ satisfies
\[
\hepsilonr^+(\omega_0) + \hepsilonr^-(\omega_0) = 0,
\]
and  $\hat{A}(k,\tau)$ is a complex-valued amplitude function. In particular,
the tangential $x$-component of the electric field on the interface $z=0$ is given by
\eqref{epar} with
\begin{equation}
A(x,\tau) = \int \hat{A}(k,\tau) e^{ikx}\,dk.
\label{FT_A}
\end{equation}

Writing the solution in real form, we have
\[
E_\parallel(x,0,t,\tau) = R(x,\tau) \cos\left(\omega_0 t + \phi(x,\tau)\right),\qquad A = \frac{1}{2} R e^{-i\phi}.
\]
This electric field consists of oscillations of frequency $\omega_0$ with a spatially-dependent amplitude and phase that
vary slowly in time.

The imposition of a solvability condition at the order $\eps^3$ to remove secular terms from the expansion
gives the following spectral equation for $\hat{A}(k,\tau)$:
\begin{align}
\begin{split}
   &\hat{A}_{\tau}(k , \tau)
   =
   i|k|\int T(k,k_2 ,k_3,k_4) \hat{A}^*(k_2,\tau) \hat{A}(k_3,\tau)\hat{A}(k_4,\tau)
\\
&\qquad\qquad\quad \delta(k +k _2 - k_3 - k_4) \, dk_2 dk_3  dk_4
- \gamma  \hat{A}(k,\tau) + \frac{i\nu}{k^2} \hat{A}(k,\tau).
   \end{split}
   \label{spec_eq}
\end{align}
The nonlinear term in \eqref{spec_eq} describes four-wave interactions of wavenumbers $k_2$, $k_3$, $k_4$ into $k= k_3+k_4-k_2$, as
indicated by the Dirac $\delta$-function in the integral.
The interaction coefficient $T$ is given by
\begin{align}
\begin{split}
T(k_1,k_2,k_3,k_4) &= 2a \frac{(k_1k_3 +       |k_1k_3|     )  (k_2k_4 +  |k_2k_4|) } {k_1 k_2 k_3 k_4(|k_1| + |k_2| + |k_3|+ |k_4|)}
\\
&+ b \frac{(k_1k_2 - |k_1k_2|) (k_3k_4 - |k_3k_4|) }{k_1 k_2 k_3 k_4(|k_1| + |k_2| + |k_3|+ |k_4|)},
\end{split}
\label{defT}
\end{align}
where
\begin{align}
\begin{split}
a &= \frac{\hat{\chi}^+(\omega_0, - \omega_0, \omega_0) + \hat{\chi}^-(\omega_0, - \omega_0, \omega_0)}{\hepsilonr'^+(\omega_0) + \hepsilonr'^-(\omega_0)},
\\
b &= \frac{\hat{\chi}^+(\omega_0, \omega_0, - \omega_0) + \hat{\chi}^-(\omega_0, \omega_0, - \omega_0)}{\hepsilonr'^+(\omega_0) + \hepsilonr'^-(\omega_0)}.
\end{split}
\label{def_ab}
\end{align}
Here, the prime denotes a derivative with respect to $\omega$.
The coefficients of damping $\gamma$ and dispersion $\nu$ are given by
\begin{align}
\begin{split}
\gamma &= \frac{\hepsiloni^+(\omega_0) + \hepsiloni^-(\omega_0)}{\hepsilonr'^+(\omega_0) + \hepsilonr'^-(\omega_0)}.
\qquad
\nu =  \frac{1}{2} \mu\omega_0^2\left[\frac{\hepsilonr^+(\omega_0)^2 +  \hepsilonr^-(\omega_0)^2}{   \hepsilonr'^{+}(\omega_0)
+ \hepsilonr'^-(\omega_0)  }\right],
\end{split}
\label{cf_disp}
\end{align}
These coefficients agree with the expansion of the
full linearized dispersion relation
\eqref{gen_disp} as $k\to\infty$, which is
\[
\omega = \omega_0 - \frac{\nu}{k^2} - i\eps^2\gamma + \dots.
\]

The relative strength of the two terms in $T$ is proportional to $b/a$.
The coefficient $\hat{\chi}(\omega_0, - \omega_0, \omega_0)$ appearing in $a$ is associated with cubically nonlinear interactions that produce waves of the same polarization,
while the coefficient $\hat{\chi}(\omega_0, \omega_0, - \omega_0)$ appearing in $b$ is associated with interactions that produce waves of opposite polarization \cite{boyd}. A typical value of the ratio
\[
r = \frac{\hat{\chi}(\omega_0, \omega_0, - \omega_0)}{\hat{\chi}(\omega_0, - \omega_0, \omega_0)}
\]
for nonlinearity due to nonresonant electronic response is $r=1$.
If $r^\pm$ are the values of this ratio in $z>0$ and $z<0$, then
\[
\frac{b}{a} = \frac{r^- + s r^+}{1+s},\qquad
s = \frac{\hat{\chi}^+(\omega_0, - \omega_0, \omega_0)}{\hat{\chi}^-(\omega_0, - \omega_0, \omega_0)}.
\]
For example, if the dielectric in $z > 0$ is a vacuum, then $s = 0$ and $b/a = r^-$.

It is convenient to write the interaction coefficient in \eqref{spec_eq} as an asymmetric function.
We could instead use the symmetrized interaction coefficient
\[
T_s(k_1,k_2, k_3,k_4) = \frac{1}{2}\left[T(k_1,k_2, k_3,k_4) + T(k_1,k_2, k_4,k_3)\right],
\]
which satisfies
\[
T_s(k_1,k_2, k_3,k_4) = T_s(k_2,k_1, k_3,k_4) = T_s(k_1,k_2, k_4,k_3) = T_s(k_3,k_4, k_1,k_2).
\]

The above solution generalizes in a straightforward way to two-dimensional SPs that depend on both tangential space variables
$\mathbf{x} = (x,y)$, with corresponding tangential wavenumber vector $\mathbf{k}=(k,\ell)$.
In that case, we write the equations in terms of a potential variable $\hat{a}$
instead of a field variable $\hat{\mathbf{A}}$, where $\hat{\mathbf{A}}(\mathbf{k},\tau) = i\mathbf{k} \hat{a}(\mathbf{k},\tau)$.
One finds that
\begin{align*}
\mathbf{E}_1(\mathbf{x},z, t, \tau) &= \mathbf{\Phi}_1(\mathbf{x},z, \tau) e^{ -i \omega_0 t} + \mathbf{\Phi}_1^*(\mathbf{x},z, \tau) e^{ i \omega_0 t},
\\
\mathbf{\Phi}_1(\mathbf{x},z, \tau )&= \int  \left[\begin{array}{c}
            i\mathbf{k}  \\
            -|\mathbf{k}|\sgn(z)
     \end{array}  \right]  \hat{a}(\mathbf{k},\tau)   e^{i\mathbf{k}\cdot\mathbf{x} - |\mathbf{k}z|} \, d\mathbf{k},
\end{align*}
where $\hat{a}$ satisfies
\begin{align*}
   &\hat{a}_{\tau}(\mathbf{k} , \tau)
   \\
   &=
   \frac{i}{|\mathbf{k}|}\int
   S(\mathbf{k},\mathbf{k}_2 ,\mathbf{k}_3,\mathbf{k}_4) \hat{a}^*(\mathbf{k}_2,\tau) \hat{a}(\mathbf{k}_3,\tau)\hat{a}(\mathbf{k}_4,\tau)
\\
&\qquad\qquad\quad \delta(\mathbf{k} +\mathbf{k} _2 - \mathbf{k}_3 - \mathbf{k}_4)
   \, d\mathbf{k}_2 d\mathbf{k}_3  d\mathbf{k}_4
  - \gamma  \hat{a}(\mathbf{k},\tau) + \frac{i\nu}{|\mathbf{k}|^2} \hat{a}(\mathbf{k},\tau),
\end{align*}
with
\begin{align*}
S(\mathbf{k}_1,\mathbf{k}_2,\mathbf{k}_3,\mathbf{k}_4) &= 2a \frac{(\mathbf{k}_1\cdot\mathbf{k}_4 +
|\mathbf{k}_1\cdot \mathbf{k}_4|     )  (\mathbf{k}_2\cdot\mathbf{k}_3 +  |\mathbf{k}_2\cdot\mathbf{k}_3|) }
{|\mathbf{k}_1| + |\mathbf{k}_2| + |\mathbf{k}_3|+ |\mathbf{k}_4|}
\\
&+ b \frac{(\mathbf{k}_1\cdot\mathbf{k}_2 - |\mathbf{k}_1\cdot\mathbf{k}_2|) (\mathbf{k}_3\cdot\mathbf{k}_4 - |\mathbf{k}_3\cdot\mathbf{k}_4|) }
{|\mathbf{k}_1| + |\mathbf{k}_2| + |\mathbf{k}_3|+ |\mathbf{k}_4|}.
\end{align*}
In this paper, we restrict our attention to one-dimensional SPs.

\section{Spatial form of the equation}

In this section, we write the spectral equation \eqref{spec_eq}--\eqref{defT} in spatial form for
$A(x,\tau)$ given in \eqref{FT_A}.
To simplify the notation, we do not show the time-dependence of $A$.

First, consider a nonlinear term proportional to
the one in $T$ with coefficient $a$:
\begin{align*}
\hat{F}(k_1) &= \int \frac{  (k_1k_4 +       |k_1k_4|     )  (k_2k_3 +  |k_2k_3|) } {2k_1 k_2 k_3 k_4(|k_1| + |k_2| + |k_3|+ |k_4|)}
\\
&\qquad\qquad\delta_{12-34}\hat{A}^*(k_2) \hat{A}(k_3) \hat{A}(k_4)\, dk_2 dk_3 dk_4,
\end{align*}
where we write
\[
\delta(k_1 + k_2 - k_3 - k_4) = \delta_{12-34}.
\]

The interaction coefficient in this integral is nonzero only if $k_1$, $k_4$ have the same sign and $k_2$, $k_3$ have the same sign. Furthermore, in that case,
we have
\begin{align*}
&\frac{  (k_1k_4 +       |k_1k_4|)  (k_2k_3 +  |k_2k_3|)) } {2k_1 k_2 k_3 k_4(|k_1| + |k_2| + |k_3|+ |k_4|)}
= \frac{2}{|k_1| + |k_2| + |k_3|+ |k_4|}
\\
&\qquad\qquad= \begin{cases} 1/(k_3+k_4) &\mbox{if $k_1,k_4 > 0$ and $k_2,k_3 >0$,}\\
1/(k_2-k_4) &\mbox{if $k_1,k_4 < 0$ and $k_2,k_3 >0$,}\\
-1/(k_2-k_4) &\mbox{if $k_1,k_4 > 0$ and $k_2,k_3 <0$,}\\
-1/(k_3+k_4) &\mbox{if $k_1,k_4 < 0$ and $k_2,k_3 <0$,}
\end{cases}
\end{align*}
 on $k_1 + k_2 = k_3 + k_4$. We decompose $\hat{A}$ into its positive and negative wavenumber components,
\begin{align*}
\hat{A}(k) &= \hat{u}(k) + \hat{v}(k),
\\
\hat{u}(k) &= \hat{\Pr}[A](k) = \begin{cases} \hat{A}(k) &\mbox{if $k > 0$,} \\ 0 &\mbox{if $k < 0$,}\end{cases}
\\
\hat{v}(k) &= \hat{\Qr}[A](k) = \begin{cases} 0 &\mbox{if $k > 0$,} \\ \hat{A}(k) &\mbox{if $k < 0$.}\end{cases}
\end{align*}
The corresponding spatial decomposition is $A = u + v$, where $u=\Pr[A]$, $v=\Qr[A]$ are given by \eqref{def_uv}.

It then follows that
\begin{align*}
\hat{F}(k_1) &= \hat{\Pr}\int\delta_{12-34} \frac{\hat{u}^*(k_2) \hat{u}(k_3)\hat{u}(k_4)}{k_3+k_4}\, dk_2 dk_3 dk_4
\\
&+ \hat{\Qr}\int\delta_{12-34} \frac{\hat{u}^*(k_2) \hat{u}(k_3)\hat{v}(k_4)}{k_2-k_4}\, dk_2 dk_3 dk_4
\\
&- \hat{\Pr}\int\delta_{12-34} \frac{\hat{v}^*(k_2) \hat{v}(k_3)\hat{u}(k_4)}{k_2-k_4}\, dk_2 dk_3 dk_4
\\
&-\hat{\Qr}\int\delta_{12-34} \frac{\hat{v}^*(k_2) \hat{v}(k_3)\hat{v}(k_4)}{k_3+k_4}\, dk_2 dk_3 dk_4.
\end{align*}
The integral in the first term,
\[
\hat{I}(k_1) = \int\delta_{12-34} \frac{\hat{u}^*(k_2) \hat{u}(k_3)\hat{u}(k_4)}{k_3+k_4}\, dk_2 dk_3 dk_4,
\]
has inverse Fourier transform
\begin{align*}
I(x) &= \int \hat{I}(k_1) e^{ik_1x}\, dk_1
\\
&= \int\hat{u}^*(k_2) e^{-ik_2 x} \frac{\hat{u}(k_3) \hat{u}(k_4) e^{i(k_3+k_4)x}}{k_3+k_4}\, dk_2 dk_3 dk_4
\\
&= i u^* \partial_x^{-1}(u^2).
\end{align*}
The other terms in $\hat{F}$ are treated similarly, and we find that
\[
F(x) = i\Pr\left[u^* \partial_x^{-1}(u^2) + v \partial_x^{-1}(uv^*)\right] - i\Qr\left[u \partial_x^{-1}(u^* v) +v^* \partial_x^{-1}(v^2) \right].
\]

The nonlinear term proportional to $b$,
\begin{align*}
\hat{G}(k_1) &= \int \frac{ (k_1k_2 - |k_1k_2|) (k_3k_4 - |k_3k_4|) }{2k_1 k_2 k_3 k_4(|k_1| + |k_2| + |k_3|+ |k_4|)}
\\
&\qquad\qquad\delta_{12-34}\hat{A}^*(k_2) \hat{A}(k_3) \hat{A}(k_4)\, dk_2 dk_3 dk_4,
\end{align*}
is zero unless  $k_1$, $k_2$ and $k_3$, $k_4$ have opposite signs, in which case
\begin{align*}
&\frac{  (k_1k_2 - |k_1k_2|) (k_3k_4 - |k_3k_4|) } {2k_1 k_2 k_3 k_4(|k_1| + |k_2| + |k_3|+ |k_4|)}
\\
&\qquad\qquad
= \begin{cases}
1/(k_2-k_3) &\mbox{if $k_1 < 0$, $k_2 > 0$, $k_3 < 0$, $k_4 >0$,}\\
1/(k_2-k_4) &\mbox{if $k_1 < 0$, $k_2 >0$, $k_3 > 0$, $k_4 < 0$,}\\
-1/(k_2-k_4) &\mbox{if $k_1 > 0$, $k_2 <0$, $k_3 < 0$, $k_4 > 0$,}\\
-1/(k_2-k_3) &\mbox{if $k_1 > 0$, $k_2 <0$, $k_3 > 0$, $k_4 < 0$,}
\end{cases}
\end{align*}
on $k_1 + k_2 = k_3 + k_4$. It then follows that
\[
G(x) = 2i\Pr\left[v \partial_x^{-1}(u  v^*)\right] - 2i\Qr\left[u \partial_x^{-1}(u^* v)\right].
\]

Projecting \eqref{spec_eq} onto positive and negative wavenumbers and using the previous equations to take the inverse Fourier transform, we find that
\begin{align}
\begin{split}
&u_\tau = i\partial_x\Pr\left[\alpha u^* \partial_x^{-1}(u^2) + \beta v \partial_x^{-1}(uv^*)\right] - \gamma u - i\nu \partial_x^{-2} u,
\qquad \Pr[u]=u,
\\
&v_\tau = i\partial_x\Qr\left[\alpha v^* \partial_x^{-1}(v^2)
+\beta u \partial_x^{-1}(u^* v)\right]-\gamma v  - i\nu \partial_x^{-2} v,
\qquad \Qr[v] = v,
\end{split}
\label{uv_eq_disp}
\end{align}
where $\gamma$, $\nu$ are given by \eqref{cf_disp} and $\alpha$, $\beta$ are given by
\begin{equation}
\alpha = 4 a, \qquad \beta = 4(a+b),
\label{def_alpa-beta}
\end{equation}
where $a$, $b$ are defined in \eqref{def_ab}.

In the presence of damping, SPs require external forcing to maintain their energy. Direct forcing by electromagnetic radiation is not feasible, since
the wavenumber, or momentum, of an electromagnetic wave with same frequency as the SP is smaller than that of the SP. Instead, SPs are typically forced
by an evanescent wave (in the Otto or Kretschmann configuration). At least heuristically, this forcing can be modeled by the inclusion of nonhomogeneous
terms in \eqref{uv_eq_disp}. For example, a steady pattern of forcing at a slightly detuned frequency $\omega = \omega_0 + \eps^2 \Omega$ leads to
equations of the form
\begin{align*}
&u_\tau = i\partial_x\Pr\left[\alpha u^* \partial_x^{-1}(u^2) + \beta v \partial_x^{-1}(uv^*)\right] - \gamma u - i\nu \partial_x^{-2} u
+ f(x) e^{-i\Omega\tau},
\\
&v_\tau = i\partial_x\Qr\left[\alpha v^* \partial_x^{-1}(v^2)
+\beta u \partial_x^{-1}(u^* v)\right]-\gamma v  - i\nu \partial_x^{-2} v + g(x) e^{-i\Omega\tau},
\end{align*}
where $\Pr[f]=f$, $\Qr[g]=g$. We do not study the behavior of the resulting damped, forced system in this paper.

\section{Short-time existence of smooth solutions}
\label{sec:short_time}

For definiteness, we consider spatially periodic solutions of the asymptotic equations
\eqref{uv_eq_disp} with zero mean. For $s\in \Rl$, let $\dot{H}^s(\Ts)$ denote the Sobolev space of $2\pi$-periodic, zero-mean
functions (or distributions)
\[
f(x) = \sum_{n\in \Ir^*} \hat{f}(n) e^{inx},
\]
where $\Ir^* = \Ir\setminus\{0\}$, such that
\[
\|f\|_s = \| |\partial_x|^s f\|_{L^2} = \left(\sum_{n\in\Ir^*} |n|^{2s} |\hat{f}(n)|^2\right)^{1/2} < \infty.
\]
We also use the Wiener-algebra norm
\begin{equation}
\|f\|_A = \sum_{n\in\Ir^*} |\hat{f}(n)|,
\label{Anorm}
\end{equation}
which satisfies $\|f\|_A\lesssim \|f\|_s$ for $s>1/2$ and
\begin{equation}
\|fg\|_A \le \|f\|_A \|g\|_A,\qquad
\|fg\|_s \le \|f\|_A \|g\|_s + \|f\|_s \|g\|_A.
\label{prod_est}
\end{equation}

The projections of $f$ onto positive and negative wavenumbers are given by
\[
\Pr[f](x) =  \sum_{n=1}^\infty \hat{f}(n) e^{inx},\qquad \Qr[f](x) =  \sum_{n=-\infty}^{-1} \hat{f}(n) e^{inx}.
\]
These projections satisfy
\[
\int_{\Ts} \Pr[f]g \, dx = \int_{\Ts}f \Qr[g]\, dx
\]
for all zero-mean, $L^2$-functions $f$, $g$. We denote the corresponding projected Sobolev spaces by
\begin{equation}
{H}_+^s(\Ts) = \left\{ u\in \dot{H}^s(\Ts) : \Pr[u]=u\right\},
\quad
{H}_-^s(\Ts) = \left\{ v\in \dot{H}^s(\Ts) : \Qr[v]=v\right\}.
\label{def_hs}
\end{equation}

\begin{theorem}
\label{th:short_time}
Suppose that $s>1/2$ and $f\in {H}_+^s(\Ts)$, $g \in {H}_-^s(\Ts)$. Then there exists $T=T(\|f\|_s,\|g\|_s) > 0$ such that
the initial-value problem
\begin{align*}
&u_\tau = i\partial_x\Pr\left[\alpha u^* \partial_x^{-1}(u^2) + \beta v \partial_x^{-1}(uv^*)\right] - \gamma u - i\nu \partial_x^{-2} u,
\qquad \Pr[u]=u,
\\
&v_\tau = i\partial_x\Qr\left[\alpha v^* \partial_x^{-1}(v^2)
+\beta u \partial_x^{-1}(u^* v)\right]-\gamma v  - i\nu \partial_x^{-2} v,
\qquad \Qr[v] = v,
\\
&u(0) = f,\qquad v(0) = g
\end{align*}
has a unique solution with
\[
u \in C([-T,T]; {H}_+^s),\qquad v \in C([-T,T]; {H}_-^s).
\]
Moreover, this $H^s$-solution breaks down as $\tau\uparrow T_* > 0$  only if
\[
\int_0^\tau \left\{\|u\|_A^2(s) + \|v\|_A^2(s) \right\}\, ds \uparrow \infty \qquad \mbox{as $\tau\uparrow T_*$}.
\]
\end{theorem}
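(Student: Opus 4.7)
The plan is to treat \eqref{uv_eq_disp} as an ODE on $H_+^s(\Ts)\times H_-^s(\Ts)$ and close the argument with Picard iteration. Although $N$ carries an external $\partial_x$ and an internal $\partial_x^{-1}$ that do not cancel algebraically, after the projections $\Pr$, $\Qr$ are applied the resulting Fourier multipliers are uniformly bounded, so $N$ in fact maps $H_+^s\times H_-^s$ into itself without derivative loss. The linear damping term $-\gamma u$ is a bounded operator on $H_+^s$, and $-i\nu\partial_x^{-2} u$ is bounded and skew-Hermitian, so neither will obstruct the argument.

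The crux is the trilinear estimate
\[
\|N(u,v)\|_s \lesssim \bigl(\|u\|_A^2 + \|v\|_A^2\bigr)\bigl(\|u\|_s + \|v\|_s\bigr),
\]
together with its bilinear Lipschitz version. To prove it for a representative term $C(u_1,u_2,u_3) = i\partial_x\Pr\bigl[u_1^*\partial_x^{-1}(u_2 u_3)\bigr]$ with $u_j\in H_+^s$, a direct Fourier computation gives, for $n\ge 1$,
\[
\widehat{C}(n) = \sum_{\substack{m,p,q\ge 1 \\ p+q-m = n}} \frac{n}{p+q}\,\overline{\hat{u}_1(m)}\,\hat{u}_2(p)\,\hat{u}_3(q),
\]
and the constraint $n = p+q-m$ with $m\ge 1$ forces $n/(p+q)\in(0,1)$. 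Hence $|\widehat{C}(n)|$ is bounded by the $n$th Fourier coefficient of a product of three nonnegative trigonometric polynomials with the same $A$- and $H^s$-norms as the $u_j$, and two applications of \eqref{prod_est} yield the claimed bound. The other cubic terms in \eqref{uv_eq_disp}, including the mixed $u$--$v$ couplings, are handled identically using the sign-decompositions of Section~3, each of which produces a symbol of the same bounded form.

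Once the trilinear estimate and its Lipschitz analogue are in hand, Picard iteration in $C([-T,T];H_+^s\times H_-^s)$ supplies a unique local solution for a time $T=T(\|f\|_s,\|g\|_s)$, depending continuously on the initial data. For the blow-up criterion, the same bound applied in an energy estimate yields
\[
\frac{d}{d\tau}\bigl(\|u\|_s^2 + \|v\|_s^2\bigr) \le C\bigl(\|u\|_A^2 + \|v\|_A^2\bigr)\bigl(\|u\|_s^2 + \|v\|_s^2\bigr),
\]
so Gr\"onwall's inequality bounds $\|u(\tau)\|_s^2+\|v(\tau)\|_s^2$ by an exponential of $\int_0^\tau(\|u\|_A^2+\|v\|_A^2)\,ds$. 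If this integral stays finite up to the maximal time $T_*$, the solution may be extended past $T_*$ by reapplication of the local existence result, contradicting maximality. The step I expect to be most technical is verifying the boundedness of the symbol for \emph{every} branch of the four-wave interaction---in particular the mixed $uv^*$ and $u^*v$ terms whose summation constraints differ from the one shown above---but in each case the sign constraint imposed by $\Pr$ or $\Qr$ is exactly what forces the symbol to lie in $[-1,1]$.
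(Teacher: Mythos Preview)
Your proposal is correct and follows essentially the same strategy as the paper: establish that the cubic nonlinearity is locally Lipschitz on $H^s$ for $s>1/2$ by exploiting the symbol bound enforced by the projection, apply Picard's theorem for local existence, and then Gronwall for the continuation criterion. The only cosmetic difference is that the paper rewrites each nonlinear term as $\Pr[|u|^2u]+[\Pr,\partial_x^{-1}(u^2)]\,u_x^*$ and packages the symbol bound into a separate commutator estimate (Lemma~\ref{lem:com_est}), whereas you compute the bounded multiplier $n/(p+q)<1$ directly; these are the same computation organized two ways.
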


\begin{proof}
Consider the nonlinear term
\[
F(u) = \partial_x\Pr[u^*\partial_x^{-1}(u^2)].
\]
By expanding the derivative and using the fact that $\Pr[u^*] = 0$, we can write this term as
\[
F(u) = \Pr[|u|^2 u] + [\Pr, \partial_x^{-1}(u^2)] u_x^*,
\]
where $[\Pr,w] = \Pr w - w\Pr$ denotes a commutator.
The use of \eqref{prod_est} and Lemma~\ref{lem:com_est}, proved in Section~\ref{sec:lem}, then implies that
\begin{align*}
\|F(u_1) - F(u_2)\|_s &\lesssim \left(\|u_1\|_A^2 + \|u_2\|_A^2\right) \|u_1-u_2\|_s
\\
&\lesssim \left(\|u_1\|_s^2 + \|u_2\|_s^2\right) \|u_1-u_2\|_s
\end{align*}
for $s > 1/2$, so $F$ is Lipschitz continuous on ${H}^s_+$. A similar computation applies to the other nonlinear terms since, for example,
\[
\Pr\left[v_x \partial_x^{-1}(uv^*)\right] = [\Pr, \partial_x^{-1}(uv^*)] v_x.
\]
It follows that the right-hand side of
\eqref{uv_eq_disp} is a Lipschitz-continuous function of $(u,v)$ on ${H}_+^s \times {H}_-^s$ for $s > 1/2$,
so the Picard theorem implies local existence.

For simplicity, we prove the breakdown criterion for the unidirectional equation \eqref{u_eq}.
A similar proof applies to the general system \eqref{uv_eq_disp}.
If $s > 1/2$ and $u(x,\tau)$ is an ${H}^s_+$-solution of \eqref{u_eq}, then
\begin{align*}
\frac{d}{d\tau} \int |\partial_x|^su^* \cdot |\partial_x|^su\, dx &=
2 \Im\int  |\partial_x|^su^* \cdot|\partial_x|^s\left\{|u|^2 u + [\Pr, \partial_x^{-1}(u^2)] u_x^*\right\}\, dx.
\end{align*}
By the Cauchy-Schwartz inequality and \eqref{prod_est}, we have
\[
\left|\int  |\partial_x|^su^* \cdot|\partial_x|^s(|u|^2 u)\,dx\right|
\le \|u\|_s \left\||u|^2 u\right\|_s \le \|u\|_A^2 \|u\|_s^2.
\]
Similarly, by Lemma~\ref{lem:com_est}, we have
\begin{align*}
\left|\int  |\partial_x|^su^* \cdot|\partial_x|^s [\Pr, \partial_x^{-1}(u^2)] u_x^*\,dx\right|
& \le\|u\|_s \left\|[\Pr, \partial_x^{-1}(u^2)] u_x^*\right\|_s\lesssim \|u\|_A^2 \|u\|_s^2.
\end{align*}
It follows that
\[
\frac{d}{d\tau} \|u\|_s^2 \lesssim \|u\|_A^2 \|u\|_s^2,
\]
so Gronwall's inequality implies that $\|u\|_s$ remains bounded so long as
\[
\int_0^\tau \|u\|_A^2(s)\, ds
\]
remains finite.
\end{proof}

\section{Hamiltonian Form}
\label{sec:ham}

The Kramers-Kronig causality relations imply that the dispersion of electromagnetic waves is accompanied by some kind of dissipation. Consequently, the macroscopic Maxwell equations do not have a Hamiltonian structure. Nevertheless, as observed by Zakharov \cite{zakharov}, in regimes where dissipation can be neglected the amplitude equations of nonlinear optics are invariably Hamiltonian, and that is the case here.

When $\gamma=0$, equation \eqref{uv_eq_disp} has the Hamiltonian form
\begin{align}
\begin{split}
&u_\tau =  \mathbf{J}  \Pr\left[\frac{\delta\mathcal{H}}{\delta u^*}\right],
\qquad
v_\tau =  \mathbf{J} \Qr\left[\frac{\delta\mathcal{H}}{\delta v^*}\right],
\end{split}
\label{uv_ham_eq}
\end{align}
subject to the constraints $\Pr[u]=u$, $\Qr[v]=v$ (see \cite{morrison} for further discussion of constraints), where
\[
\mathbf{J} = i|\partial_x|
\]
is a skew-adjoint Hamiltonian operator, and
the Hamiltonian is given by
\begin{align*}
\mathcal{H}(u,v, u^*,v^*) &= \int \left\{\frac{1}{2}\alpha (u^*)^2|\partial_x|^{-1}(u^2)+ \beta u^* v |\partial_x|^{-1}(uv^*)\right.
\\
&\qquad\qquad\left.+ \frac{1}{2}\alpha (v^*)^2 |\partial_x|^{-1}(v^2) + \nu u^*|\partial_x|^{-3} u + \nu v^*|\partial_x|^{-3}v  \right\} \, dx.
\end{align*}
Here, the operator $|\partial_x|$ is defined spectrally by
\[
|\partial_x|(e^{ikx}) = |k| e^{ikx},
\]
so that
\[
|\partial_x|^{-1} u = i\partial_x^{-1} u,\qquad |\partial_x|^{-1} v = -i\partial_x^{-1} v.
\]

Equivalently, the Hamiltonian form of the asymptotic equation for $A=u+v$ is
\[
A_\tau = \mathbf{J} \left[\frac{\delta\mathcal{H}}{\delta A^*}\right].
\]
The Hamiltonian form of the spectral equation \eqref{spec_eq}, with $\gamma=0$, for $\hat{A}$ is
\[
\hat{A}_\tau = \hat{\mathbf{J}} \left[\frac{\delta\mathcal{H}}{\delta \hat{A}^*}\right],
\]
where $\hat{\mathbf{J}} = i|k|$, and
\begin{align}
\begin{split}
\mathcal{H}(\hat{A}, \hat{A}^*) &= \frac{1}{2}\int T(k_1, k_2, k_3, k_4)
\hat{A}^*(k_1) \hat{A}^*(k_2) \hat{A}(k_3) \hat{A}(k_4)
\\
&\qquad\qquad \delta(k_1 + k_2 - k_3 -k_4)\, dk_1 dk_2 dk_3 dk_4
\\
&+ \nu\int \frac{\hat{A}^*(k)\hat{A}(k)}{|k|^3}\, dk.
\end{split}
\label{spec_ham}
\end{align}

In addition to the energy $\mathcal{H}$, other conserved quantitites for \eqref{uv_ham_eq} are the right and left actions (associated with the invariance of $\mathcal{H}$
under phase changes $u\mapsto e^{i\phi} u$ and $v \mapsto e^{i\psi} v$)
\begin{equation}
\mathcal{S} = \int u^* |\partial_x|^{-1} u\, dx, \qquad \mathcal{T} = \int v^* |\partial_x|^{-1} v\, dx, 
\label{uv_action}
\end{equation}
and the momentum (associated with the invariance of $\mathcal{H}$
under translations $x\mapsto x+h$)
\begin{equation}
\mathcal{P} = \int \left\{|u|^2 - |v|^2\right\}\, dx.
\label{uv_mom}
\end{equation}
These conservation laws lead to global a priori estimates for the $H^{-1/2}$-norms of $u$, $v$, $u^2$, $v^2$, and $u^*v$ (assuming that
$\alpha$, $\beta$ have the same sign), but these estimates do not appear to be sufficient to imply the existence of global weak solutions.

From \eqref{uv_mom}, the unidirectional equation with $v=0$,
\begin{equation}
u_\tau = i \partial_x\Pr[u^* \partial_x^{-1} (u^2)]- i\nu \partial_x^{-2} u,
\label{uni_disp}
\end{equation}
has, in addition, an a priori estimate for the $L^2$-norm of $u$;
this estimate does not apply to the system because the momenta of the left and right moving waves have opposite signs.
In the next section, we show that this $L^2$-estimate is sufficient to imply the existence of global weak solutions of \eqref{uni_disp}.

Expanding the derivative in \eqref{uni_disp} and setting $\nu=0$, we get
\[
u_\tau =  i \Pr[|u|^2 u)] + i \Pr[ u^*_x \partial_x^{-1} (u^2)].
\]
If we neglect the second term on the right-hand side of this equation, then
we obtain the Szeg\"o equation \eqref{szego_eq}, up to a difference in sign.
The sign-difference is inessential, since the $CP$-transformation $u\mapsto u^*$ and $x\mapsto -x$
maps $u_\tau = i\Pr[|u|^2 u]$ into $iu_\tau = \Pr[|u|^2 u]$; analogous transformations apply to \eqref{uv_eq_disp} and \eqref{uni_disp}.

The Szeg\"o equation is Hamiltonian, but it has a different, complex-canonical,  constant Hamiltonian structure
from \eqref{uni_disp}:
\[
i u_\tau = \Pr\left[\frac{\delta \mathcal{H}}{\delta u^*}\right],\qquad\mathcal{H}(u,u^*) = \frac{1}{2}\int (u^*)^2 u^2\, dx.
\]
The momentum for this equation is the $H^{1/2}$-norm of $u$,
\[
\mathcal{P} = \int u^*|\partial_x| u\, dx,
\]
which is scale-invariant and critical for \eqref{szego_eq}. As a result, the Szeg\"o equation has unique global smooth $H^s$-solutions for $s \ge 1/2$ and global weak $L^2$-solutions \cite{szego}.

\section{Global existence of weak solutions for the unidirectional equation}

We consider spatially periodic solutions of the unidirectional equation \eqref{u_eq}. The same results apply to the damped, dispersive version of the equation.

For $1\le p \le \infty$, we denote the $L^p$-Hardy space of zero-mean functions on $\Ts$ with vanishing negative Fourier coefficients by
\begin{align*}
&L^p_+(\Ts) = \left\{f\in L^p(\Ts) : \mbox{$\hat{f}(n) = 0$ for $n\le 0$}\right\},
\\
&\hat{f}(n) = \frac{1}{2\pi}\int_{\Ts} f(x) e^{-inx}\, dx.
\end{align*}
We denote the corresponding first-order $L^p$-Sobolev space by
\[
W_+^{1,p}(\Ts) = \left\{f\in L_+^p(\Ts) : \mbox{$f_x\in L_+^p(\Ts)$}\right\},
\]
where, by the Poincar\'e inequality for zero-mean functions, we use the norm
\[
\|f\|_{{W}_+^{1,p}} = \|f_x\|_{L^p}.
\]
We continue to use the notation in \eqref{def_hs} for the $L^2$-Sobolev spaces of order $s\in \Rl$.

\begin{definition}
\label{def:weak_sol}
A function $u: \Rl \to L^2_+(\Ts)$ is a weak solution of \eqref{u_eq} if
\begin{equation}
\frac{d}{d\tau}\int \phi^* u\, dx  = - i\int \phi^*_x u^*\partial_x^{-1}(u^2) \, dx
\label{weak_form}
\end{equation}
for every $\phi\in W^{1,p}_+(\Ts)$ with $p > 2$,
in the sense of distributions on $\mathcal{D}'(\Rl)$.
\end{definition}

We remark that, by Sobolev embedding,
\begin{equation}
\|u^*\partial_x^{-1}(u^2) \|_{L^2} \le \|\partial_x^{-1}(u^2)\|_{L^\infty} \|u\|_{L^2}
\lesssim \|u^2\|_{L^1} \|u\|_{L^2} \le \|u\|_{L^2}^3,
\label{non_est}
\end{equation}
so the nonlinear term makes sense.

Next, we prove the global existence of weak solutions; the weak continuity of the nonlinear term depends crucially
on the fact that $u$ contains only positive Fourier components.

\begin{theorem}
\label{th:global_weak}
If $f\in L_+^2(\Ts)$, then there exists a global weak solution
of the initial value problem
\begin{align*}
&u_\tau = i \partial_x\Pr\left[u^* \partial_x^{-1}(u^2)\right],\quad \Pr[u]=u,
\\
&u(0)=f
\end{align*}
with $u\in L^\infty(\Rl;L_+^2)$ and  $u_\tau \in L^\infty(\Rl; H_+^{-1})$.
\end{theorem}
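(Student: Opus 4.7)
The plan is to construct weak solutions via Galerkin approximation, extract a weak-$*$ limit using uniform a priori bounds, and verify that the limit satisfies the weak formulation \eqref{weak_form}. Let $V_N = \operatorname{span}\{e^{inx} : 1 \le n \le N\} \subset L^2_+(\Ts)$ with orthogonal projection $P_N$, and consider the projected ODE
\[
u^N_\tau = i P_N \partial_x\Pr\bigl[(u^N)^* \partial_x^{-1}((u^N)^2)\bigr],\qquad u^N(0) = P_N f,
\]
which is locally well-posed on the finite-dimensional space $V_N$. The first step is an a priori $L^2$-bound by direct computation: after integrating by parts twice in $\operatorname{Re}\bigl(i\int (u^N)^* \partial_x[(u^N)^* \partial_x^{-1}((u^N)^2)]\,dx\bigr)$, the derivative $\tfrac{d}{d\tau}\|u^N\|_{L^2}^2$ reduces to $\operatorname{Re}\bigl(i\int |u^N|^4\,dx\bigr) = 0$, so $\|u^N(\tau)\|_{L^2} \le \|f\|_{L^2}$ for all $\tau$ and the Galerkin flow is global.

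Combined with estimate \eqref{non_est} and the boundedness of $\partial_x \colon L^2 \to H^{-1}$ (uniformly in $N$, since $P_N$ and $\Pr$ are uniformly bounded on every Sobolev space), this yields $\|u^N_\tau\|_{L^\infty(\Rl;H^{-1}_+)} \lesssim \|f\|_{L^2}^3$. Banach--Alaoglu then extracts a subsequence with $u^N \stackrel{*}{\rightharpoonup} u$ in $L^\infty(\Rl; L^2_+)$, and the Aubin--Lions lemma, using the compact embedding $L^2 \hookrightarrow H^{-s}$ on $\Ts$, upgrades this to strong convergence $u^N \to u$ in $C_{\mathrm{loc}}(\Rl; H^{-s})$ for every $s > 0$.

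The main obstacle is passing to the limit in the cubic term $\int \phi_x^*(u^N)^*\partial_x^{-1}((u^N)^2)\,dx$, since cubic nonlinearities are not in general weakly continuous on $L^2$. Here the strictly positive Fourier support is essential. By \eqref{non_est}, the integrand $(u^N)^*\partial_x^{-1}((u^N)^2)$ is uniformly bounded in $L^\infty(\Rl; L^2)$, so by density of trigonometric polynomials in $W^{1,p}_+(\Ts)$ with $p>2$ it suffices to verify convergence for monomial test functions $\phi(x) = e^{imx}$ with $m\ge 1$. Writing $u^N = \sum_{n\ge 1} c_n^N(\tau) e^{inx}$ and using the resonance $n_1 + n_2 = m + n$, the spatial integral reduces, up to constants, to
\[
\sum_{n \ge 1} \frac{m\,(c_n^N)^*}{m+n} \sum_{\substack{n_1 + n_2 = m+n \\ n_1,n_2 \ge 1}} c_{n_1}^N c_{n_2}^N,
\]
whose inner sum has only finitely many terms ($m+n-1$ of them) precisely because of the lower bound $n_i \ge 1$. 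Strong $H^{-s}$-convergence plus the uniform $L^2$-bound yields $c_n^N(\tau) \to c_n(\tau)$ for each fixed $n$, handling any initial segment $n \le K$; Cauchy--Schwarz combined with $\sum_n |c_n^N|^2 \lesssim \|f\|_{L^2}^2$ controls the tail by $C\|f\|_{L^2}^3/\sqrt{K}$ uniformly in $N$. Sending first $N\to\infty$ and then $K\to\infty$ gives pointwise-in-$\tau$ convergence of the spatial integral, and the uniform $L^2$-bound on the nonlinearity permits dominated convergence in $\tau$ to pass the limit through in $\mathcal{D}'(\Rl)$, verifying \eqref{weak_form}.
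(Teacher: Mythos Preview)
Your proof is correct and follows the same Galerkin scaffolding as the paper (the $L^2$ conservation, the $L^\infty_\tau H^{-1}$ bound on $u^N_\tau$, and the Aubin--Lions step giving $u^N\to u$ in $C_{\mathrm{loc}}(\Rl;H^{-s})$), but you handle the passage to the limit in the cubic term differently.

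The paper does not pass to the limit mode-by-mode. Instead it introduces the auxiliary quantity $w_N=\partial_x^{-1}(u_N^2)$, proves the uniform bound $\|w_N\|_{W^{1,1}}\le\|f\|_{L^2}^2$, and then uses Lemma~\ref{lem:w} (an estimate $\|fg_x\|_{H^{-s}}\lesssim\|f\|_{L^2}\|g\|_{L^2}$ for $f,g\in L^2_+$, $s>3/2$) to bound $w_{N\tau}$ in $H^{-s}$. A second application of Aubin--Lions, with the compact embedding $W^{1,1}(\Ts)\hookrightarrow L^q(\Ts)$ for $q<\infty$, then yields \emph{strong} compactness of $w_N$ in $C([-T,T];L^q)$, and the cubic term is handled by weak--strong pairing $u_N^*\,w_N\rightharpoonup u^*w$. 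The finite-Fourier-sum observation you use centrally appears in the paper only at the very end, to identify the strong limit $w$ with $\partial_x^{-1}(u^2)$.

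Your route is more elementary: by testing against monomials and exploiting that $\widehat{(u^N)^2}(k)$ is a sum over only $k-1$ pairs, you get pointwise-in-$\tau$ convergence of the spatial integral directly, with the tail controlled by Cauchy--Schwarz against the factor $(m+n)^{-1}$. This avoids Lemma~\ref{lem:w} and the second Aubin--Lions argument entirely. The paper's route, in exchange, produces an honest strong-convergence statement for $\partial_x^{-1}(u_N^2)$ in $C([-T,T];L^q)$, which is a bit more information and sits more naturally within standard compactness machinery. Either way, the decisive structural ingredient is the same: the strictly positive Fourier support of $u_N$ is what makes the quadratic convolution locally finite and hence weakly continuous.
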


\begin{proof}
We use a Galerkin method. Let $\Pr_N$ denote the projection onto the first $N$ positive Fourier modes,
\[
\Pr_N\left[\sum_{n=-\infty}^\infty \hat{f}(n) e^{inx}\right] = \sum_{n=1}^N \hat{f}(n) e^{inx},
\]
and let $u_N$ be the solution of the ODE
\begin{align}
\begin{split}
&u_{N\tau} = i\partial_x\Pr_N\left[ u^*_N\partial_x^{-1}(u_N^2)\right],\qquad \Pr_N[u_N] = u_N,
\\
&u_N(0) = \Pr_N[f].
\end{split}
\label{Neq}
\end{align}
Then
\begin{align*}
\frac{d}{d\tau} \int u_N^* u_N \, dx &=
2\Im \int u_N^* \partial_x\Pr_N\left[u^*_N\partial_x^{-1}(u_N^2)\right]\, dx
\\
&= - \Im \int\partial_x(u_N^{*2}) \partial_x^{-1}(u_N^{2})\, dx
\\
&=0,
\end{align*}
so the solution $u_N : \Rl\to L^2_+(\Ts)$ exists globally in time, and
\begin{equation}
\|u_N\|_{L^2}  = \|\Pr_N f\|_{L^2} \le \|f\|_{L^2}.
\label{uN_L2}
\end{equation}
Moreover, as in \eqref{non_est}, we have
\begin{equation}
\left\|u_N^* \partial_x^{-1} (u_N^2)\right\|_{L^2} \lesssim \|f\|_{L^2}^3.
\label{nonN_est}
\end{equation}

Fix an arbitrary $T > 0$. Then it follows from \eqref{Neq}--\eqref{nonN_est} that
\begin{align}
\begin{split}
\left\{u_N : N\in \Nl\right\}\qquad &\mbox{is bounded in
$L^\infty(-T,T; L_+^2)$},
\\
\left\{u_{N\tau} : N \in \Nl\right\}
\qquad
&\mbox{is bounded in $L^\infty(-T,T;H_+^{-1})$}.
\end{split}
\label{uN_bounds}
\end{align}
Thus, by the Banach-Alaoglu theorem, we can extract a weak*-convergent subsequence, still denoted by $\{u_N\}$, such that
\begin{align}
\begin{split}
u_N \overset{*}{\rightharpoonup} u \qquad &\mbox{in $L^\infty(-T,T; L_+^2)$ as $N\to \infty$},
\\
u_{N\tau} \overset{*}{\rightharpoonup} u_\tau \qquad &\mbox{in $L^\infty(-T,T; H^{-1}_+)$ as $N\to \infty$}.
\end{split}
\label{uN_con}
\end{align}

In order to take the limit of the nonlinear term in \eqref{Neq}, we consider
\[
w_N = \partial_x^{-1}(u_N^2),
\]
which satisfies the estimates
\begin{align}
&\|w_N\|_{{W}_+^{1,1}} = \|u_N^2\|_{L^1} \le \|f\|_{L^2}^2,
\label{wN_W11}
\\
&\|w_N\|_{L^\infty} \lesssim \|w_N\|_{W_+^{1,1}} \le \|f\|_{L^2}^2.
\label{wN_Linf}
\end{align}

The time-derivative of $w_N$ satisfies
\[
w_{N\tau} = 2\partial_x^{-1}\left(u_N u_{N\tau}\right) = 2i \partial_x^{-1}\left(u_N\partial_x\Pr_N\left[ u^*_N w_N\right]\right).
\]
Using Lemma~\ref{lem:w}, proved in Section~\ref{sec:lem}, together with \eqref{uN_L2} and \eqref{wN_Linf}, we have for $s > 3/2$ that
\begin{align*}
\left\|u_N\partial_x\Pr_N\left[ u^*_N w_N\right]\right\|_{H^{-s}} &\lesssim \|u_N\|_{L^2} \| u^*_N w_N\|_{L^2}
\\
&\lesssim \|w_N\|_{L^\infty}\|u_N\|^2_{L^2}
\\
&\lesssim \|f\|_{L^2}^4.
\end{align*}
It follows from this estimate and the one in \eqref{wN_W11} that, for $s > 1/2$,
\begin{align}
\begin{split}
\left\{ w_N : N\in \Nl\right\}\qquad
&\mbox{is bounded in $L^\infty(-T,T; W_+^{1,1})$},
\\
\left\{w_{N\tau} : N\in \Nl\right\}\qquad
&\mbox{is bounded in $L^\infty(-T,T; H_+^{-s})$}.
\label{wN_bound}
\end{split}
\end{align}

The space $W_+^{1,1}(\Ts)$ is compactly embedded in $L^q_+(\Ts)$ for any $1\le q < \infty$,
and $L^q_+(\Ts)$ is continuously embedded in $H_+^{-s}(\Ts)$ for $s > {1/2}$,
so the Aubin-Lions-Simon theorem (see \cite{boyer}, for example)
and \eqref{wN_bound} imply that
\begin{equation}
\left\{w_{N} : N\in \Nl\right\}\qquad
\mbox{is strongly precompact in $C(-T,T; L^q)$}.
\label{wN_compact}
\end{equation}
We can therefore extract a further subsequence from $\{u_N\}$ such that the corresponding subsequence $\{w_N\}$ converges strongly, meaning that
\begin{equation}
\partial_x^{-1}(u_N^2) \rightarrow w \qquad \mbox{strongly in $C(-T,T; L^q)$ as $N\to \infty$}.
\label{wN_strong}
\end{equation}
By weak-strong convergence, we get from \eqref{uN_con} and \eqref{wN_strong} that
\[
u_N^*\partial_x^{-1}(u_N^2) \overset{*}{\rightharpoonup} u^* w\qquad \mbox{in $L^\infty(-T, T; L^r)$ as $N\to \infty$},
\]
where $2 \le q <\infty$ and $r = 2q/(q+2) < 2$.
Taking the limit of the weak form of \eqref{Neq} as $N\to \infty$ for test functions $\phi \in W^{1,p}$ with $p=r' > 2$,
we find that $(u,w)$ satisfies the weak form of the equation
\[
u_\tau = i \partial_x \Pr[u^*w],\qquad \Pr[u]=u.
\]

To show that $u$ is a weak solution of \eqref{u_eq}, it remains to verify that $w = \partial_x^{-1}(u^2)$.
The bounds in \eqref{uN_bounds} and the Aubin-Lions-Simon theorem imply that
\[
u_N \rightarrow u \qquad \mbox{strongly in $C(-T,T;H_+^{-s})$ as $N\to \infty$}
\]
for $0<s\le 1$.
It follows that the Fourier coefficients  of ${u}_N$ converge to those of $u$
for every $n\in \Nl$ and $t\in [-T,T]$, since $e^{-inx} \in H_{-}^s = (H_+^{-s})'$ and
\begin{align*}
&\hat{u}_N(n,t) = \frac{1}{2\pi} \int_{\Ts} u_N(x,t) e^{-inx}\, dx
\to \frac{1}{2\pi} \int_{\Ts} u(x,t) e^{-inx}\, dx
= \hat{u}(n,t),
\end{align*}
for every $t \in [-T,T]$. Since the negative Fourier coefficients of $u_N$ are zero,
the Fourier coefficients  of ${u}_N^2$ are given by finite sums of products of the Fourier coefficients of $\hat{u}_N$, so they
converge to the Fourier coefficients of $u^2$:
\begin{align*}
&\widehat{(u_N^2)}(n,t) = \sum_{k=1}^{n-1} \hat{u}_N(n-k,t) \hat{u}_N(k,t)
\to \sum_{k=1}^{n-1} \hat{u}(n-k,t) \hat{u}(k,t)
= \widehat{(u^2)}(n,t).
\end{align*}

Similarly, the strong convergence in \eqref{wN_strong} implies that
\[
\frac{1}{in} \widehat{(u_N)^2}(n,t) \to \hat{w}(n,t)
\]
 for every $n\in \Nl$ and $t\in [-T,T]$. Thus,
\[
\frac{1}{in} \widehat{(u^2)}(n,t) = \hat{w}(n,t),
\]
which implies that $w = \partial_x^{-1}(u^2)$.

In addition, since $u\in C(-T,T; H_+^{-s})$
for $0<s\le 1$, the solution takes on the initial condition
$u(0) = f$ in this $H^{-s}$-sense.

Finally, we get a global weak solution $u : \Rl \to L^2_+(\Ts)$ by a standard diagonal argument.
First, we construct a weak solution on the time-interval $(-1,1)$ as the limit as $N\to\infty$ of a subsequence $\{u^1_N\}$ of approximate solutions.
Next, for each $T\in \Nl$, we construct a weak solution on $(-T-1, T+1)$
by extracting a convergent subsequence $\{u^{T+1}_N\}$ of approximate solutions from the sequence $\{u^T_N\}$ of approximate solutions
that converges to the weak solution
on $(-T,T)$. Then the diagonal sequence of approximate solutions $\{u_N^N\}$ converges
on every time-interval $(-T,T)$ to a global weak solution $u$.
\end{proof}

Our proof gives weak solutions that satisfy \eqref{weak_form}
for test functions $\phi \in W^{1,p}(\Ts)$ with $p>2$, but it does not show that this condition holds when $p=2$. The proof fails for $p=2$
because $W^{1,1}(\Ts)$ is not compactly embedded in $C(\Ts)$, whereas
it is compactly embedded in $L^q(\Ts)$ for $q<\infty$ \cite{brezis}.

We do not consider the uniqueness of these weak solutions, which seems unlikely without the imposition of further admissibility conditions.
Numerical simulations, shown in the next section, suggest that \eqref{u_eq} also has global smooth solutions, but it is unclear how to prove this
in the absence of stronger a priori estimates.

\section{Numerical solutions}

In this section, we show some numerical solutions of the asymptotic equations.
We computed spatially $2\pi$-periodic solutions by a pseudo-spectral method and the method of lines,
typically using a forth-order Runge-Kutta method in time.

In our computations, we used a total of $M=2^m$ positive and negative Fourier modes, for various values of $m$, with the highest half of the modes truncated
to avoid aliasing errors. The resulting effective resolution for the numerical approximation
\[
u(x,\tau) \approx \sum_{k=1}^N \hat{u}(k,\tau) e^{ikx},\qquad v(x,\tau) \approx \sum_{k=1}^{N} \hat{v}(-k,\tau) e^{-ikx}
\]
is $N=2^n$ with $n=m-2$. The surface plots shown here are computed with $N = 2^{11}$ modes.

The numerical solutions indicate that nonlinear effects lead to strong spatial focusing of SPs. The focusing appears to be
most extreme for real initial data, when the oscillations of the SP are in phase at different spatial locations, and we show solutions for this case.
In deriving the asymptotic equations, we neglect any spatial dispersion of the media, which could become important when the
SP focuses, but we do not consider its effects here.

In Figure~\ref{fig:uv}, we show a solution of the system \eqref{uv_eq} with the initial data
\begin{equation}
u(x,0) = e^{ix} + 2 e^{2i(x+2\pi^2)},\qquad v(x,0) = u^*(x,0).
\label{uv_ic}
\end{equation}
The solution focuses strongly. As shown in Figure~\ref{fig:uv_max}, the maximum value of the field-strength variable,
\[
\|A\|_\infty(\tau) = \max_{x\in \Ts} |A(x,\tau)|,
\]
increases by a factor of over $20$ from
$\|A\|_\infty\approx 5.6$ at $\tau=0$ to $\|A\|_\infty\gtrsim 120$ at $\tau = 0.8$, and perhaps blows up in finite time.
Figure~\ref{fig:uv_A} shows the $A$-norm of the solution, which by Theorem~\ref{th:short_time} controls its smoothness. Even with the use of $N = 2^{24} \approx 1.7 \times 10^7$ Fourier modes, it is unclear whether or not the $A$-norm blows up, and further numerical studies are required.

\begin{figure}
\includegraphics[width=4in]{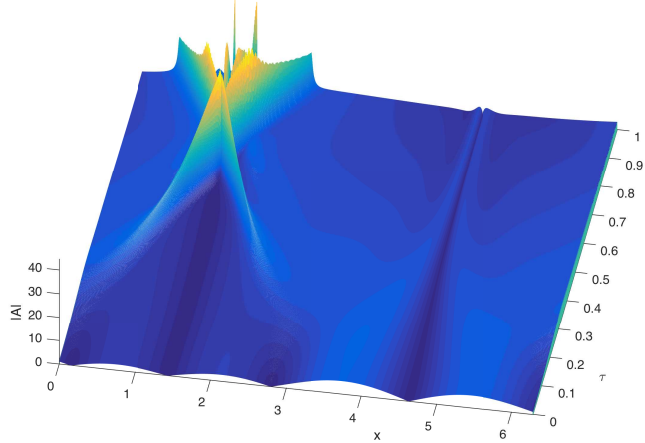}
\caption{A numerical solution of \eqref{uv_eq} for $|A|=|u+v|$ with $\alpha=1$, $\beta=2$ and initial data \eqref{uv_ic}.}
\label{fig:uv}
\end{figure}

\begin{figure}
\includegraphics[width=4in]{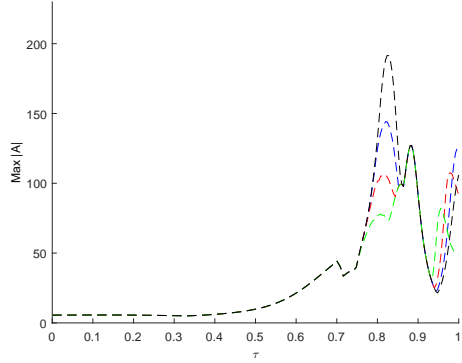}
\caption{Plot of the  maximum value of $|A|$ for the initial value problem shown in Figure~\ref{fig:uv} as a function of time $\tau$ with $N = 2^n$ modes where, $n= 21, 22, 23, 24$.}
\label{fig:uv_max}
\end{figure}

\begin{figure}
\includegraphics[width=4in]{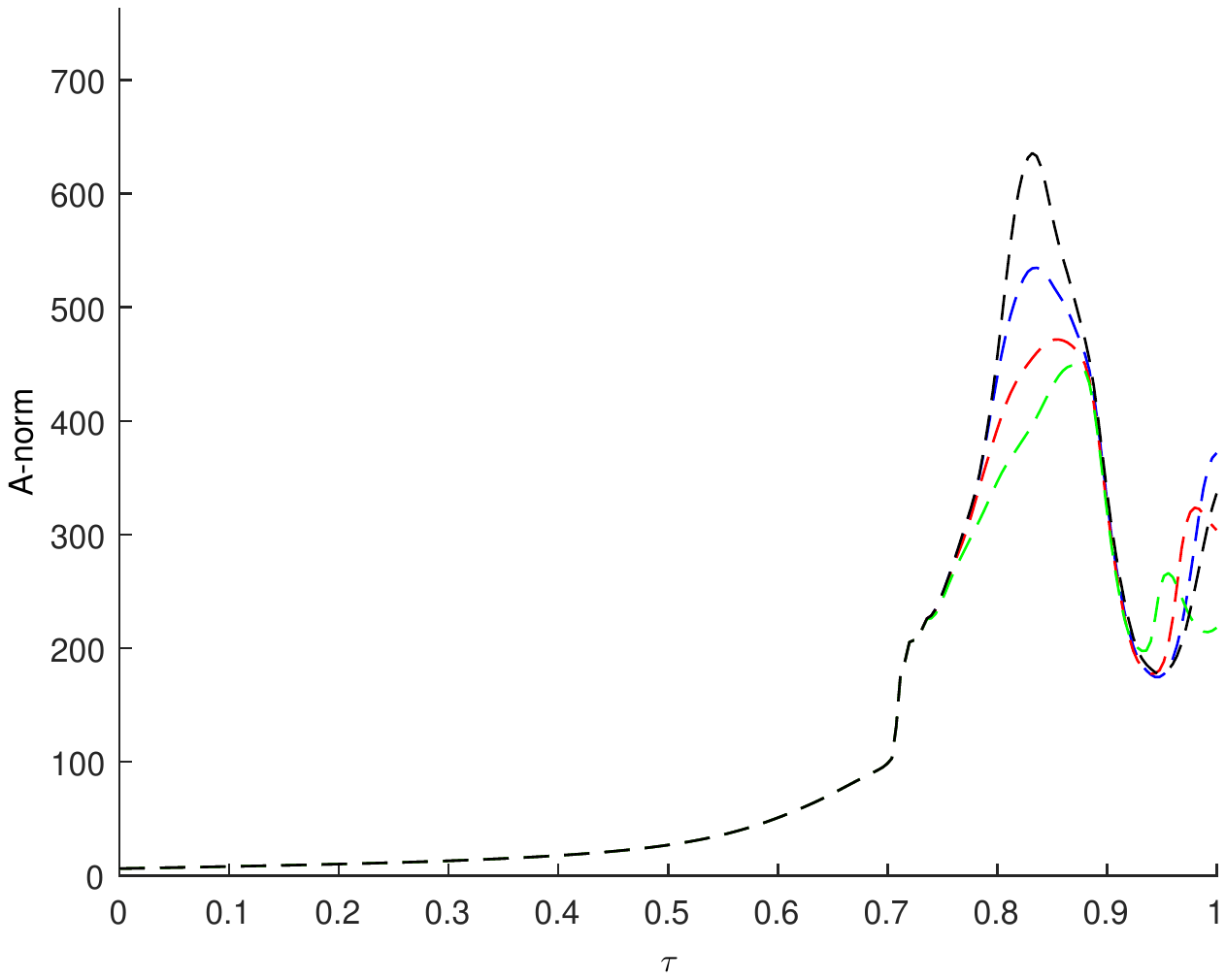}
\caption{Plot of the $A$-norm for the initial value shown in Figure~\ref{fig:uv} as a function of time $\tau$ with $N = 2^n$ modes, where $n= 21, 22, 23, 24$.}
\label{fig:uv_A}
\end{figure}

In Figure~\ref{fig:u}, we show a solution of the unidirectional equation \eqref{u_eq} with the initial data
\begin{equation}
u(x,0) = e^{ix} + 2 e^{2i(x+2\pi^2)}.
\label{u_ic}
\end{equation}
The solution also focuses strongly. However, despite this focusing, the $A$-norm of the solution
appears to remain finite, as shown in Figure~\ref{fig:u_a1}.

\begin{figure}
\includegraphics[width=4in]{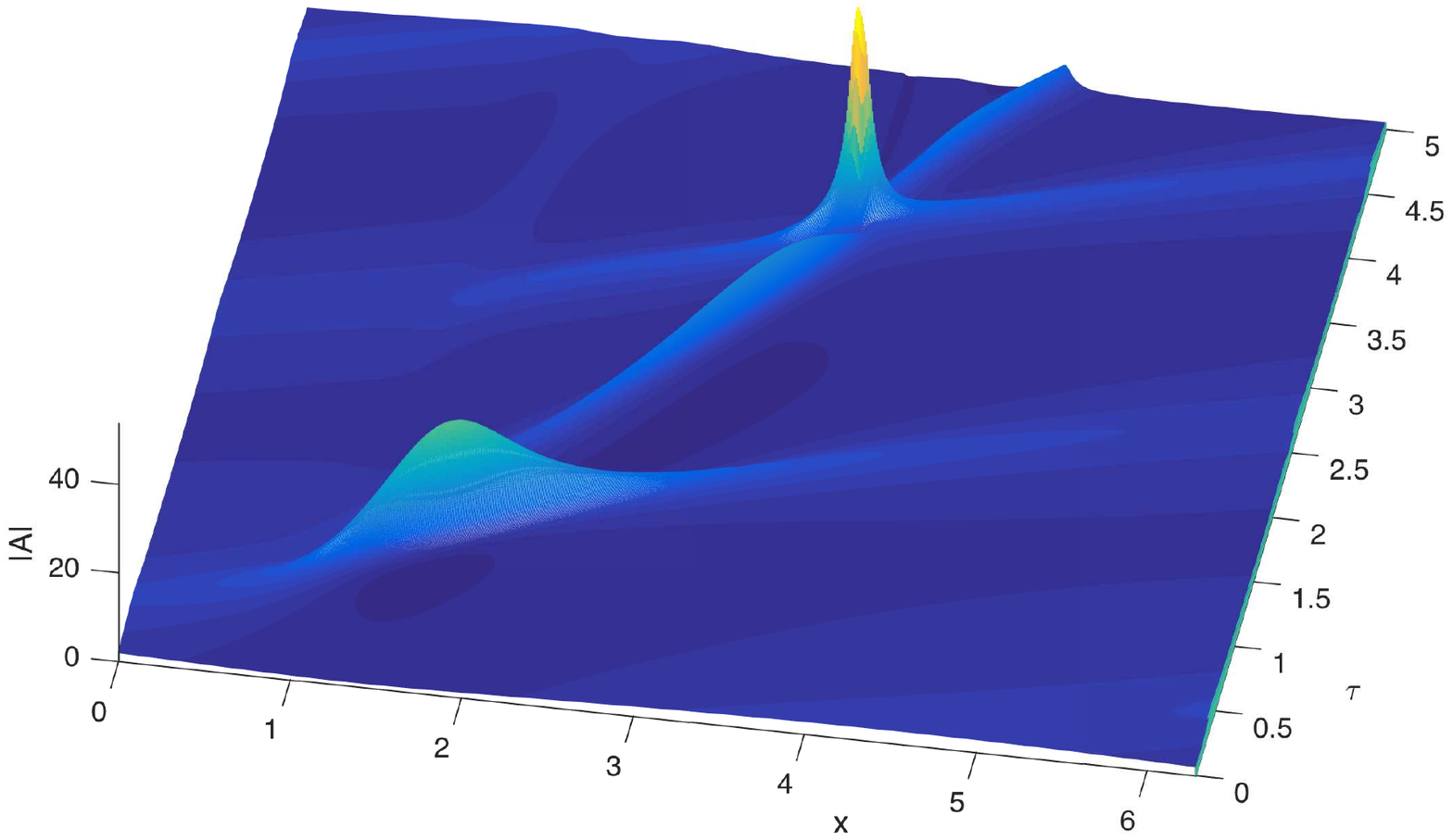}
\caption{A numerical solution of the unidirectional equation \eqref{u_eq} for $|A|=|u|$ with initial data \eqref{u_ic}.}
\label{fig:u}
\end{figure}

\begin{figure}
\includegraphics[width=4in]{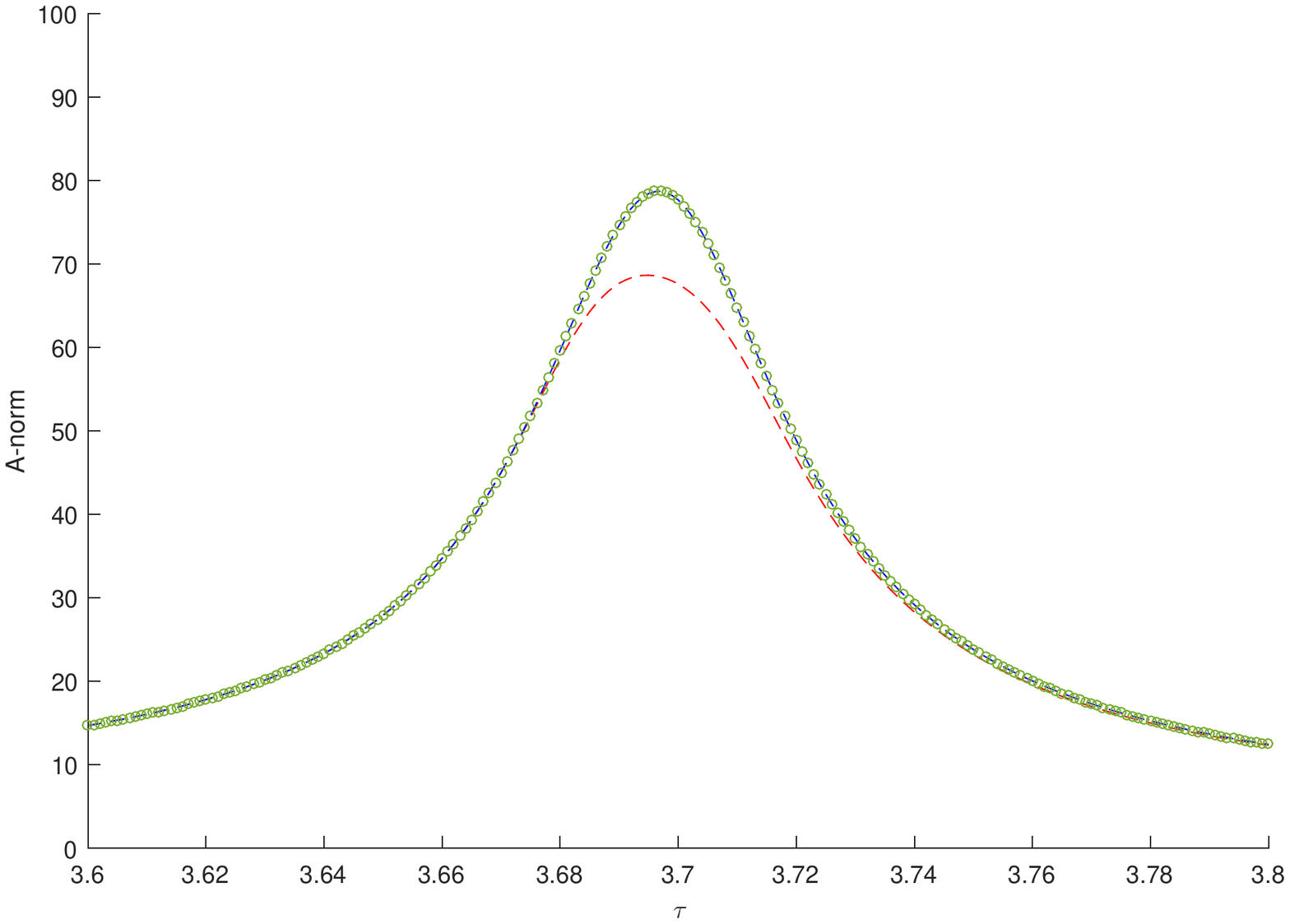}
\caption{Plot of the $A$-norm for the initial value problem shown in Figure~\ref{fig:u} with $N = 2^n$ Fourier modes, where $n=12$ (red dashed line), $n=14$ (blue dashed line), and $n=16$ (green circles). The solution appears to be fully converged for $N = 2^{14}$.}
\label{fig:u_a1}
\end{figure}

For comparison, we show a solution of the Szeg\"o equation with the same initial data in Figure~\ref{szego_eq}.
This solution does not exhibit the strong focusing of the previous ones.

\begin{figure}
\includegraphics[width=4in]{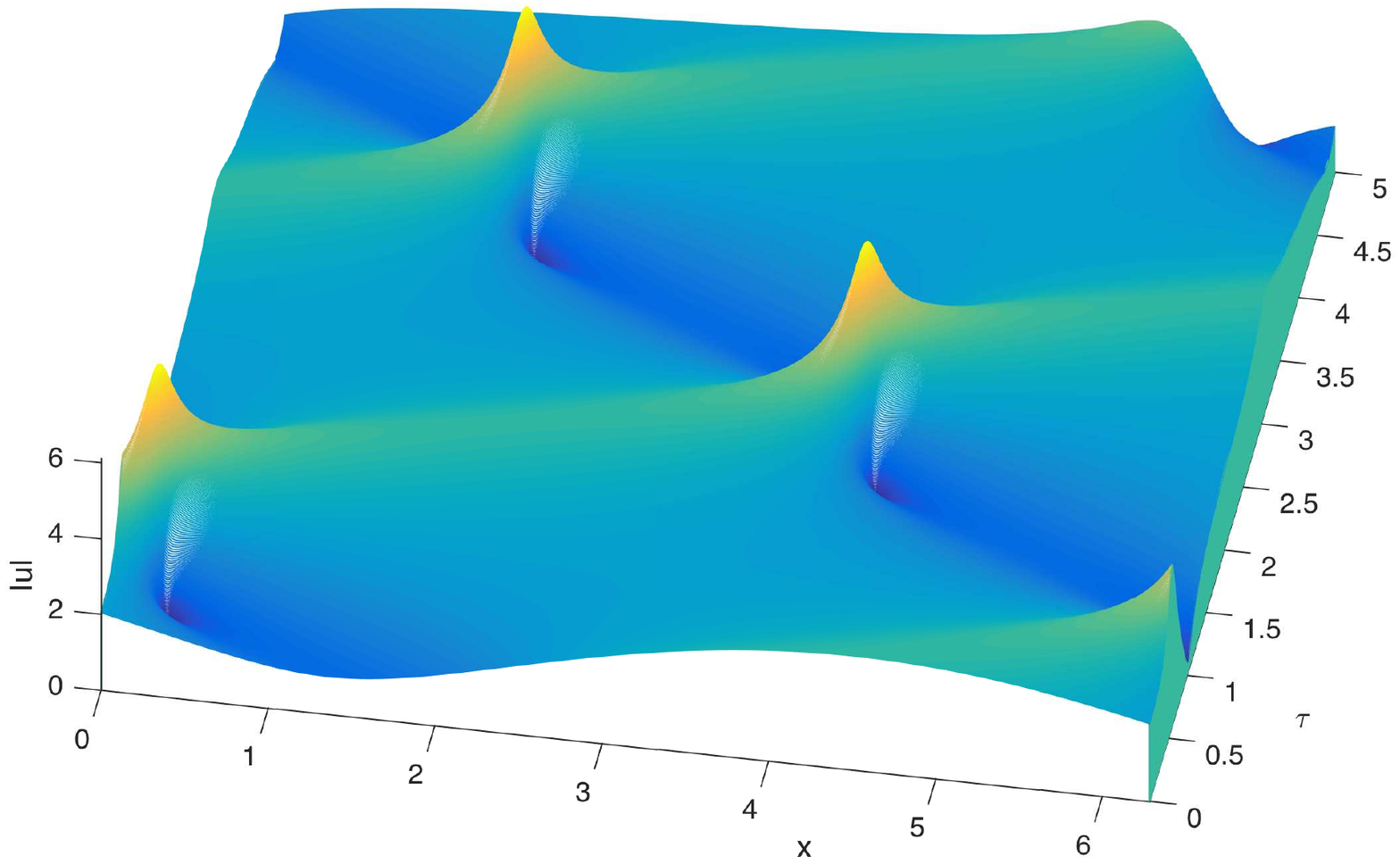}
\caption{A numerical solution of the Szeg\"o equation \eqref{szego_eq} for $|u|$ with initial data \eqref{u_ic}.}
\label{fig:szego}
\end{figure}

Finally, in Figures~\ref{fig:uv_disp_1}--\ref{fig:uv_disp_-1} we show two solutions of \eqref{uv_eq_disp} which illustrate the effect of positive and negative short-wave dispersion, respectively.

\begin{figure}
\includegraphics[width=4in]{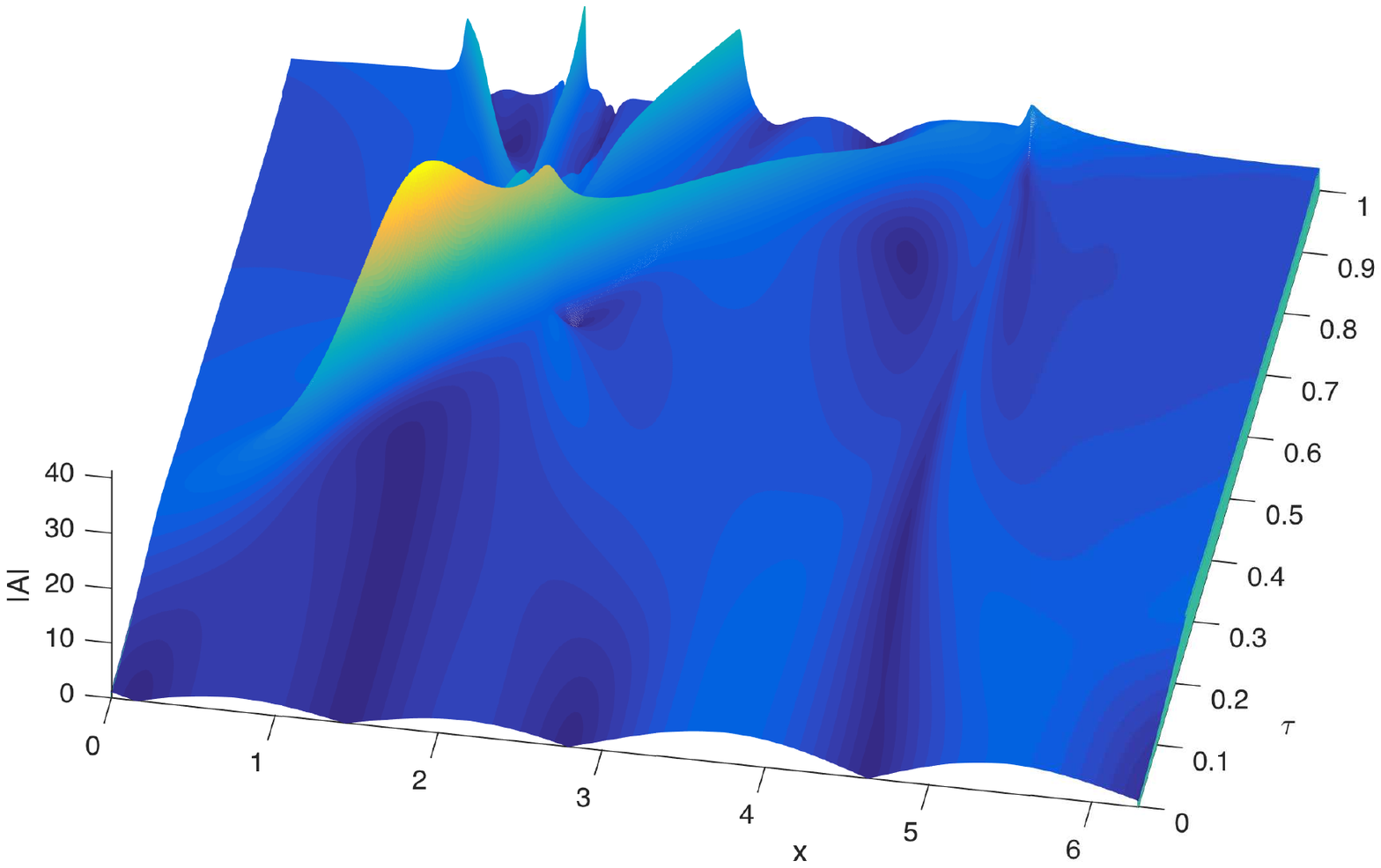}
\caption{Plot of $|A|$ for a numerical solution of \eqref{uv_eq_disp} with $\alpha=1$, $\beta=2$, $\gamma=0$, positive dispersion coefficient  $\nu=1$, and initial data \eqref{uv_ic}.}
\label{fig:uv_disp_1}
\end{figure}

\begin{figure}
\includegraphics[width=4in]{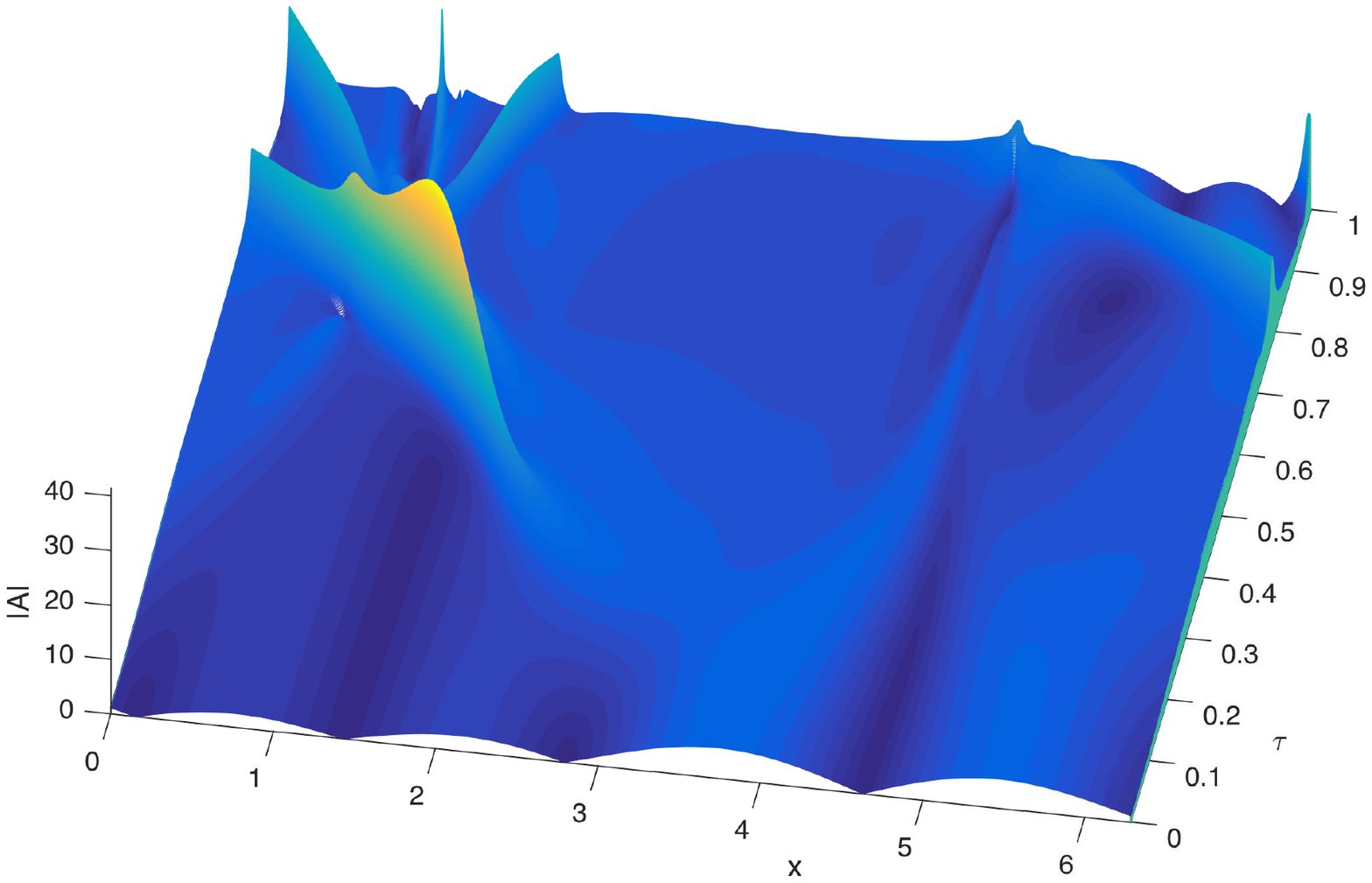}
\caption{Plot of $|A|$ for a numerical solution of \eqref{uv_eq_disp} with $\alpha=1$, $\beta=2$, $\gamma=0$, negative dispersion coefficient $\nu=-1$, and initial data \eqref{uv_ic}.}
\label{fig:uv_disp_-1}
\end{figure}

\section{The asymptotic expansion}
\label{sec:app}

In this section, we describe the asymptotic expansion in more detail.
We look for solutions of the dimensionless Maxwell equations \eqref{non_dim_max} that depend on $(x,z,t)$ and decay as $|z|\to\infty$.
We assume the constitutive relations \eqref{non_dim_con}, where the permittivity and susceptibility are given in $z>0$ and $z<0$
by \eqref{eps_pm} and \eqref{perm_exp}. The jump conditions across the interface $z=0$ are given by \eqref{jump_con}.

We introduce a `slow' time variable $\tau=\eps^2 t$ and expand time derivatives as
\[
\partial_t = \partial_t + \eps^2 \partial_\tau.
\]
We then expand the solutions as in \eqref{asy_exp} and equate coefficients of powers of $\eps$ in the result.

\subsection{Order $\eps$.}

At first-order, we find that the electric fields satisfy
\begin{align*}
\nabla \cdot \mathbf{D}_1  = 0, \qquad \nabla \times \mathbf{E}_1 = 0,
\end{align*}
with the jump conditions across $z=0$
\[
[\mathbf{n} \cdot \mathbf{D}_1 ] =  0,
\qquad
[\mathbf{n}  \times \mathbf{E}_1 ] = 0.
\]
The leading-order constitutive relation is
\[
\mathbf{D}_1(x,z,t,\tau) = \int \epsilonr(t-t') \mathbf{E}_1(x,z,t',\tau)\, dt',
\]
where the slow time variable $\tau$ occurs as a parameter. The expansion of the constitutive relation is derived below.

The electric field is the gradient of a harmonic potential in $z>0$ and $z<0$, and the Fourier-Laplace solutions that decay as $|z|\to\infty$ are
\[
\mathbf{E}_1 = \begin{cases} \hat{\mathbf{\Phi}}_1^+ e^{ikx - |k|z-i \omega t} + \text{c.c.} &\mbox{if $z>0$,}\\
\hat{\mathbf{\Phi}}_1^- e^{ikx + |k|z-i \omega t}  + \text{c.c.} &\mbox{if $z<0$,}\end{cases}
\qquad \hat{\mathbf{\Phi}}_1^{\pm} = A^\pm\left[\begin{array}{c}
            1 \\
              0  \\
          \pm i \sgn(k)
     \end{array} \right],
\]
where c.c.\ stands for the complex-conjugate of the preceding term. Moreover,
\[
\mathbf{D}_1 = \hepsilonr(\omega) \mathbf{E}_1,
\]
where $\hepsilonr = \hepsilonr^+$ in $z>0$ and $\hepsilonr = \hepsilonr^-$ in $z<0$.
The jump condition for $\mathbf{n}\times \mathbf{E}_1$ implies that $A^+=A^-$, and then the jump condition
for $\mathbf{n}\cdot\mathbf{D}_1$ implies that $\omega = \omega_0$ where
\begin{equation}
\hepsilonr^+(\omega_0) + \hepsilonr^-(\omega_0) = 0.
\label{eps0_disp}
\end{equation}
Fourier superposing these solutions over $k$, we get the linearized solution in \eqref{lin_sol},
\begin{align*}
\mathbf{E}_1 &= \mathbf{\Phi}_1(x,z,\tau) e^{-i\omega_0 t} + \text{c.c.},
\\
\mathbf{D}_1 &= \hepsilonr(\omega_0) \mathbf{\Phi}_1(x,z,\tau) e^{-i\omega_0t} + \text{c.c.}.
\end{align*}


\subsection{Order $\eps^2$.}

At second-order, we find that the magnetic fields satisfy
\begin{align*}
\nabla \cdot \mathbf{B}_2 = 0 , \qquad \nabla \times \mathbf{H}_2 =  \mathbf{D}_{1t}, \qquad \mathbf{B}_2 = \mu \mathbf{H}_2,
\end{align*}
with jump conditions $[\mathbf{n}\cdot \mathbf{B}_2] = [\mathbf{n}\times \mathbf{H}_2] = 0$. The solution is
\begin{align}
\begin{split}
&\mathbf{H}_2(x,z, \tau, t) = \mathbf{\Xi}_2(x,z, \tau) e^{-i \omega_0 t} + \text{c.c.},
\\
&\mathbf{\Xi}_2(x,z, \tau) = - i\sgn(z)\omega_0 \hepsilonr(\omega_0)  \left(\int\frac{1 }{ |k|  } \hat{A}(k, \tau)  e^{ikx -  |kz|} \, dk\right)
\left[\begin{array}{c}
        0 \\
        1 \\
        0
     \end{array} \right].
     \end{split}
     \label{h2}
\end{align}

\subsection{Expansion of the constitutive relation.}

The solution for the leading-order electric field has a frequency spectrum that is concentrated in a narrow band of order $\eps^2$ width about $\omega_0$.
We expand the linearized constitutive relation in  frequency space as
\begin{align*}
\hat{\mathbf{D}}(\omega) &= \hepsilonr(\omega) \hat{\mathbf{E}}(\omega)
\\
&=\left[\hepsilonr(\omega_0) + \hepsilonr'(\omega_0)( \omega - \omega_0) + O( \omega - \omega_0)^2 \right] \hat{\mathbf{E}}(\omega),
\end{align*}
where the prime denotes a derivative with respect to $\omega$  and we omit the spatial dependence.

If $\mathbf{E} = \mathbf{\Phi}(\tau) e^{-i\omega_0 t}$, then inversion the Fourier transform
gives
\[
\mathbf{D} = \hepsilonr(\omega_0)\mathbf{\Phi}(\tau) e^{-i\omega_0 t}  + i \eps^2 \hepsilonr'(\omega_0)
\mathbf{\Phi}_{\tau} e^{ -i \omega_0 t} +  O(\eps^4).
\]
Similarly, if $\mathbf{E} = \mathbf{\Phi}^*(\tau) e^{i\omega_0 t}$, then
\[
\mathbf{D} = \hepsilonr(\omega_0)\mathbf{\Phi}^*(\tau) e^{i\omega_0 t}  - i \eps^2 \hepsilonr'(\omega_0)
\mathbf{\Phi}^*_{\tau} e^{i \omega_0 t} +  O(\eps^4),
\]
where we have used the reality-condition that $\hepsilonr$ is an even function of $\omega$.
The nonlinear susceptibility $\hat{\chi}$ and the imaginary part $\hepsiloni$ of the permittivity
are evaluated at $\pm\omega_0$ to leading order in $\eps$.

The third-order electric field has the form
\begin{equation}
\mathbf{E}_3 = \mathbf{\Phi}_3(x,z,\tau) e^{-i\omega_0 t} + \text{c.c.} + \text{n.r.t.},
\label{e3}
\end{equation}
where n.r.t.\ stands for nonresonant terms proportional
to $e^{\pm 3i\omega_0 t}$, which do not effect the equation for $A$.

Using the expansion $\mathbf{E} = \eps \mathbf{E}_1 + \eps^3 \mathbf{E}_3 +O(\eps^5)$ and \eqref{perm_exp} in \eqref{non_dim_con}, we find that
\begin{align}
\mathbf{D}_3 &= \left[\hepsilonr(\omega_0)
\mathbf{\Phi}_3 + \mathbf{A}_1 + \mathbf{F}_1\right] e^{-i\omega_0 t}+\text{c.c.} + \text{n.r.t.},
\label{d3}
\end{align}
where, after the use of \eqref{sym_chi},
\begin{align}
\begin{split}
\mathbf{A}_1 &= i \hepsilonr'(\omega_0) \mathbf{\Phi}_{1\tau} + i\hepsiloni(\omega_0) \mathbf{\Phi}_1,
\\
\mathbf{F}_1 &= 2\hat{\chi}(\omega_0, -\omega_0, \omega_0) (\mathbf{\Phi}_1^*\cdot\mathbf{\Phi}_1) \mathbf{\Phi}_1
+ \hat{\chi}(\omega_0, \omega_0, -\omega_0) (\mathbf{\Phi}_1\cdot\mathbf{\Phi}_1) \mathbf{\Phi}_1^*.
\end{split}
\label{defAF}
\end{align}

\subsection{Order $\eps^3$.}
At third-order, we get equations for the electric fields
\begin{equation}
\nabla \cdot \mathbf{D}_3 = 0, \qquad \nabla \times \mathbf{E}_3 = - \mu\mathbf{H}_{2t},
\label{third_order}
\end{equation}
together with the jump conditions
\begin{align*}
  [ \mathbf{n} \cdot \mathbf{D}_3  ] = 0,\qquad
[ \mathbf{n} \times \mathbf{E}_3] = 0
\end{align*}
across $z=0$.
Using \eqref{h2}--\eqref{d3} in \eqref{third_order}, and the equation $\nabla\cdot \mathbf{\Phi}_1=0$,
we find that $\mathbf{\Phi}_3$ satisfies the nonhomogeneous PDEs
\begin{align*}
\nabla\cdot \mathbf{\Phi}_3 &=  - \frac{1}{\hepsilonr(\omega_0)}\nabla\cdot\mathbf{F}_1,
\\
\nabla\times \mathbf{\Phi}_3 &= i\mu\omega_0 \mathbf{\Xi}_2,
\end{align*}
in $z>0$ and $z<0$, with the jump conditions
\[
[\hepsilonr(\omega_0)
\mathbf{n}\cdot\mathbf{\Phi}_3] =  - [\mathbf{n}\cdot(\mathbf{A}_1+\mathbf{F}_1)],
\qquad [\mathbf{n}\times \mathbf{\Phi}_3] = 0.
\]
In addition, we require that
\[
\mathbf{\Phi}_3(x,z,\tau) \to 0\qquad\mbox{as $|z|\to\infty$}.
\]
The homogeneous form of these equations has a nontrivial solution, and the nohomogeneous terms must satisfy an appropriate solvability condition
if  a solution for $\mathbf{\Phi}_3$ is to exist.

To derive this solvability condition,
we write the equations in component form for
\begin{equation}
\mathbf{\Phi}_3 = \left(\begin{array}{c} E_1\\0\\ E_2\end{array}\right),
\quad
\mathbf{F}_1 = \left(\begin{array}{c} F_1\\0\\ F_2\end{array}\right),
\quad
\mathbf{A}_1 = \left(\begin{array}{c} A_1\\0\\ A_2\end{array}\right),
\quad
\mathbf{\Xi}_2 = \left(\begin{array}{c} 0\\D\\ 0\end{array}\right),
\label{def_cap}
\end{equation}
and take the Fourier transform with respect to $x$, where
\[
E_1(x,z,\tau) = \int \hat{E}_1(k,z,\tau) e^{ikx}\, dk,
\]
and similarly for the other variables.

After some algebra, and the use of \eqref{eps0_disp}, we find that the equations reduce to the ODE
\begin{equation}
-k^2\hat{E}_{1}  + \hat{E}_{1zz} =   \frac{ k^2\hat{F}_{1} - ik \hat{F}_{2z}}{\hepsilonr(\omega_0)} +  i \omega_0\mu  \hat{D}_z
\label{e1_eq}
\end{equation}
for $\hat{E}_1 = \hat{E}_1^\pm$ with $\hepsilonr = \hepsilonr^\pm$ in $z>0$ and $z<0$, the jump conditions
\begin{align}
 \hat{E}^+_{1z}  + \hat{E}^-_{1z} &= - ik\frac{( \hat{A}_2^+ - \hat{A}_2^-) + ( \hat{F}_2^+ - \hat{F}_2^-)}{\hepsilonr^+(\omega_0)}
 + i \omega_0 \mu  ( \hat{D}^+  +   \hat{D}^-),
 \label{jump1}
 \\
\hat{E}^+_{1} -  \hat{E}^-_{1}  &= 0,
\label{jump2}
\end{align}
where the functions in \eqref{jump1}--\eqref{jump2} are evaluated at $z=0$, and the decay conditions
\[
\hat{E}_1^+(k,z,\tau) \to 0\quad\mbox{as $z\to\infty$},\qquad \hat{E}_1^-(k,z,\tau) \to 0\quad\mbox{as $z\to-\infty$}.
\]

Integration by parts yields
\begin{align*}
&\int_{0}^{\infty} e^{-|k|z}( \hat{E}_{1zz} - k^2 \hat{E}_{1}) \ dz = -\hat{E}_{1z}^+  - |k|\hat{E}_{1}^+,
\\
&\int^{0}_{-\infty} e^{|k|z}( \hat{E}_{1zz} - k^2 \hat{E}_{1}) \ dz = \hat{E}_{1z}^-  - |k|\hat{E}_{1}^-,
\end{align*}
where the boundary terms on the right-hand side are evaluated at $z=0$, and the boundary terms at infinity vanish as a result of the decay conditions.
Subtracting these two equations, then using the second jump condition \eqref{jump2} and the differential equation \eqref{e1_eq}, we get that
\begin{align*}
 \hat{E}_{1z}^+ + \hat{E}_{1z}^- &= \int^{0}_{-\infty} e^{|k|z} \left(     \frac{ k^2\hat{F}_{1} - ik \hat{F}_{2z}}{\hepsilonr(\omega_0)}
 +  i \omega_0 \mu \hat{D}_z  \right) \, dz
 \\
 &\qquad- \int_{0}^{\infty} e^{-|k|z}   \left(  \frac{ k^2\hat{F}_{1} - ik \hat{F}_{2z}}{\hepsilonr(\omega_0)} +  i \omega_0 \mu  \hat{D}_z  \right) \, dz \\
  &\qquad  - \frac{ik}{\epsilonr^+(\omega_0)} \left(  \hat{F}^+_2   - \hat{F}^-_2   \right) + i \omega_0\mu  \left(  \hat{D}^+ +  \hat{D}^- \right).
\end{align*}
Using this equation to eliminate $\hat{E}_1$ from the first jump condition \eqref{jump1}, and simplifying the result, we find that
\begin{align}
\begin{split}
-\frac{ ik}{\hepsilonr^+(\omega_0)} \left.\left(\hat{A}_2^+\ - \hat{A}_2^-\right)\right|_{z=0}
 &= \int^{0}_{-\infty} e^{|k|z} \left(     \frac{ k^2\hat{F}_{1} - ik \hat{F}_{2z}}{\hepsilonr(\omega_0)}
 +  i \omega_0 \mu \hat{D}_z  \right) \, dz
 \\
 &- \int_{0}^{\infty} e^{-|k|z}   \left(  \frac{ k^2\hat{F}_{1} - ik \hat{F}_{2z}}{\hepsilonr(\omega_0)} +  i \omega_0 \mu  \hat{D}_z  \right) \, dz,
\end{split}
\label{solv_con}
\end{align}
which is the required solvability condition.

Finally, we use \eqref{def_cap}, \eqref{defAF}, \eqref{h2}, and \eqref{lin_sol} to express the functions in \eqref{solv_con} in terms of $\hat{A}$
and evaluate the resulting $z$-integrals on the right-hand side. After some algebra, which we omit, we get equation \eqref{spec_eq} for $\hat{A}$.

\section{Appendix: two lemmas}
\label{sec:lem}

For completeness, we prove two simple estimates. First, we prove the commutator estimate used in the proof of Theorem~\ref{th:short_time}.

\begin{lemma}
\label{lem:com_est}
If $f,g\in \dot{H}^s(\Ts)$ and $s > 1/2$, then
\[
\left\|[\Pr,\partial_x^{-1} f]\, \partial_x g\right\|_s \lesssim \|f\|_A \|g\|_s + \|f\|_s \|g\|_A,
\]
where $[\Pr,F] = \Pr F - F\Pr$ denotes the commutator of the projection $\Pr$ with multiplication by $F$.
\end{lemma}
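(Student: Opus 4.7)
The plan is to work in Fourier coordinates and read off the commutator's symbol directly. Writing $f = \sum_m \hat{f}(m) e^{imx}$ and $g = \sum_n \hat{g}(n) e^{inx}$ with $m, n \in \Ir^*$, the Fourier coefficient of $[\Pr, \partial_x^{-1} f]\,\partial_x g$ at frequency $\ell = m + n$ equals
\[
\sum_{m+n=\ell} \frac{n}{m}\,\hat{f}(m)\,\hat{g}(n)\,\bigl(\mathbf{1}_{\ell>0} - \mathbf{1}_{n>0}\bigr).
\]
The symbol in parentheses vanishes unless $\operatorname{sgn}(\ell) \neq \operatorname{sgn}(n)$. In this sign-switching regime $m = \ell - n$ satisfies the clean identity $|m| = |\ell| + |n|$, so two useful inequalities drop out for free: $|n/m| \leq 1$ and $|\ell|^s \leq |m|^s$ (for $s \geq 0$).

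Combining these pointwise estimates on the symbol gives
\[
|\ell|^s \left|\widehat{[\Pr, \partial_x^{-1} f]\,\partial_x g}(\ell)\right| \leq \sum_{m+n=\ell} |m|^s\,|\hat{f}(m)|\,|\hat{g}(n)|,
\]
which is the value at $\ell$ of the convolution $(|\cdot|^s |\hat{f}|) * |\hat{g}|$. Young's inequality for convolutions on $\Ir$ with exponent pair $(2,1) \to 2$ then yields
\[
\bigl\|[\Pr, \partial_x^{-1} f]\,\partial_x g\bigr\|_s \leq \bigl\| |\cdot|^s \hat{f}\bigr\|_{\ell^2}\,\|\hat{g}\|_{\ell^1} = \|f\|_s\,\|g\|_A,
\]
from which the claimed symmetric bound follows trivially (since adding $\|f\|_A\|g\|_s$ only weakens the estimate).

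I don't anticipate a genuine obstacle here; the estimate is essentially a paraproduct bookkeeping. The one observation that makes everything work --- and which is the only step that is not mechanical --- is that the commutator annihilates precisely the \emph{non}-sign-switching interactions, which are exactly the ones in which $\partial_x$ could have fallen on the high-frequency factor. After this cancellation, the derivative is always carried by $f$, whose mode $|m|$ dominates both the output mode $|\ell|$ and the other input mode $|n|$, and the resulting high-low convolution is handled by Young.
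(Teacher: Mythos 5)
Your proof is correct and follows essentially the same route as the paper: compute the commutator's symbol on the Fourier side, observe that it vanishes unless the output frequency and $n$ have opposite signs (forcing $|n|\le|m|$), and conclude with Young's inequality. Your version is in fact marginally sharper, since the identity $|m|=|\ell|+|n|$ in the sign-switching regime lets you bound everything by $\|f\|_s\|g\|_A$ alone, whereas the paper bounds the symbol by $|m|^s+|n|^s$ and retains both terms.
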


\begin{proof}
By density, it suffices to prove the estimate for zero-mean, $C^\infty$-functions
\[
f(x) = \sum_{n\in\Ir^*} \hat{f}_n e^{inx},\qquad g(x) = \sum_{n\in\Ir^*} \hat{g}_n e^{inx}.
\]
We have
\[
|\partial_x|^s [\Pr,\partial_x^{-1} f] \partial_x g
= \sum_{m,n\in \Ir^*} \frac{n}{m} |m+n|^s \left\{\theta_{m+n} - \theta_n\right\} \hat{f}_m \hat{g}_n e^{i(m+n)x},
\]
where
\[
\theta_n = \begin{cases} 1 &\mbox{if $n>0$,} \\ 0 &\mbox{if $n < 0$.}\end{cases}
\]
If $\theta_{m+n} - \theta_n \ne 0$, then $m+n$ and $n$ have opposite signs, so $|n| \le |m|$ and therefore
\[
\left|\frac{n}{m}|m+n|^s \left\{\theta_{m+n} - \theta_n\right\}\right| \lesssim |m|^{s} + |n|^{s}.
\]
The result then follows from Parseval's theorem and Young's inequality.
\end{proof}

Next, we prove the estimate used in the proof of Theorem~\ref{th:global_weak}.

\begin{lemma}
\label{lem:w}
If $f,g \in L^2_+(\Ts)$ and $s > 3/2$, then $f g_x \in H_+^{-s}(\Ts)$ and
\[
\left\|f g_x\right\|_{H^{-s}} \lesssim \|f\|_{L^2} \|g\|_{L^2}.
\]
\end{lemma}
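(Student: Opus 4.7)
The proof plan hinges on the one-sided Fourier support of $f$ and $g$, which forces the convolution defining $\widehat{fg_x}$ to range only over indices $m,n\ge 1$ with $m+n=k$, so in particular $n\le k$. This is exactly what allows us to trade the $n$-factor coming from the derivative on $g$ against the negative Sobolev weight $k^{-s}$.

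First I would write out the Fourier coefficients of $fg_x$ explicitly. Expanding $f=\sum_{m\ge 1}\hat{f}_m e^{imx}$ and $g=\sum_{n\ge 1}\hat{g}_n e^{inx}$, we have $fg_x\in H_+^{-s}$ with
\[
\widehat{fg_x}(k)=i\!\!\sum_{\substack{m+n=k\\ m,n\ge 1}}\! n\,\hat{f}_m\hat{g}_n,\qquad k\ge 2.
\]
Since $n\le k-1\le k$ throughout the sum, I pull out a factor of $k$ to get $|\widehat{fg_x}(k)|\le k\sum_{m+n=k}|\hat{f}_m||\hat{g}_n|$.

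Next I would apply Cauchy--Schwarz to the (at most $k-1$)-term convolution sum:
\[
\left(\sum_{\substack{m+n=k\\ m,n\ge 1}}|\hat{f}_m||\hat{g}_n|\right)^{\!2}\le k\sum_{\substack{m+n=k\\ m,n\ge 1}}|\hat{f}_m|^2|\hat{g}_n|^2,
\]
so that $|\widehat{fg_x}(k)|^2\le k^3\sum_{m+n=k}|\hat{f}_m|^2|\hat{g}_n|^2$. Summing against the weight $k^{-2s}$ and interchanging the order of summation yields
\[
\|fg_x\|_{H^{-s}}^2\le\sum_{m,n\ge 1}(m+n)^{3-2s}|\hat{f}_m|^2|\hat{g}_n|^2.
\]

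Finally I would use the hypothesis $s>3/2$: since $3-2s<0$ and $m+n\ge 2$, the weight $(m+n)^{3-2s}$ is uniformly bounded by $2^{3-2s}$, and Parseval gives the required estimate $\|fg_x\|_{H^{-s}}\lesssim\|f\|_{L^2}\|g\|_{L^2}$. The only place one needs to be careful is the \emph{use of positivity}: without it, $n$ could range over all of $\mathbb{Z}$ with $|n|\gg|k|$, and the key bound $n\le k$ would fail. The threshold $s>3/2$ comes exactly from needing $3-2s<0$ after the two factors of $k$ accumulated from $g_x$ and from Cauchy--Schwarz on a sum of length $\sim k$.
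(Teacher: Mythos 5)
Your proof is correct and takes essentially the same route as the paper: both arguments exploit the one-sided Fourier support to force $n\le k$ in the convolution sum, pull out the resulting factor of $k$ against the weight $k^{-s}$, and finish with Cauchy--Schwarz. The only cosmetic difference is that the paper bounds the convolution via $\|\hat f*\hat g\|_{\ell^\infty}\le\|\hat f\|_{\ell^2}\|\hat g\|_{\ell^2}$, whereas you apply Cauchy--Schwarz to the length-$k$ sum and then interchange the order of summation; both give the stated threshold $s>3/2$.
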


\begin{proof}
By density, it is sufficient to prove the estimate for smooth functions. Let
\[
\hat{f}(n) = \frac{1}{2\pi} \int_{\Ts} f(x) e^{-inx}\, dx,\qquad
\hat{g}(n) = \frac{1}{2\pi} \int_{\Ts} g(x) e^{-inx}\, dx
\]
denote the Fourier coefficients of $f$ and $g$, which vanish for $n\le 0$. Then
the $n$th Fourier coefficient of $h=fg_x$ is zero for $n\le 1$, and for $n \ge 2$ it is given by
the finite sum
\[
\hat{h}(n) = \sum_{k=1}^{n-1} \hat{f}(n-k)\cdot ik \hat{g}(k).
\]
It follows that
\[
|\hat{h}(n)| \le n \|\hat{f}*\hat{g}\|_{\ell^\infty} \le n \|\hat{f}\|_{\ell^2} \|\hat{g}\|_{\ell^2},
\]
so if $s > 3/2$, we get that
\begin{align*}
\|h\|_{H^{-s}} &= \left(\sum_{n=2}^\infty \frac{|\hat{h}(n)|^2}{n^{2s}}\right)^{1/2}
\\
&\le \left(\sum_{n=2}^{\infty}\frac{1}{n^{2s-2}}\right)^{1/2} \|\hat{f}\|_{\ell^2} \|\hat{g}\|_{\ell^2}
\\
&\lesssim \|f\|_{L^2} \|g\|_{L^2}.
\end{align*}
\end{proof}

\end{document}